\documentclass[a4paper, 10pt]{amsart}

\title{Image of the third Johnson homomorphism}

\author{Quentin Faes}
\author{Ricard Riba}

\address{Institut de Recherche en Mathématique et Physique, Université catholique de
Louvain, Chemin du Cyclotron 2, 1348 Louvain-la-Neuve, Belgium}
\email{quentin.faes@uclouvain.be}

\address{EPSEM, Universitat Politècnica de Catalunya, 08241 Bacelona, Spain}
\email{ricard.riba.garcia@upc.edu}

\subjclass[2020]{Primary 57K20, Secondary 57K31, 57K16, 20F14}

\keywords{Johnson homomorphism, Tree Lie algebra,
Handlebody subgroups.}

\date{\today}

\usepackage{amsmath,amsthm,amssymb,graphicx,rotating,
pinlabel,yhmath,tikz,tikz-cd,
listings,xcolor,enumerate,braket,amsfonts,subfiles}

\usepackage[margin=1.5in]{geometry}

\definecolor{specialblue}{RGB}{35,88,180}
\definecolor{lightblue}{RGB}{10,180,255}
\definecolor{verylightblue}{RGB}{213,255,255}

\usepackage{hyperref}
\hypersetup{colorlinks=true,citecolor=specialblue,linkcolor=specialblue,urlcolor=black,filecolor=black}

\usepackage[normalem]{ulem}
\usepackage[all]{xy}

\usetikzlibrary{arrows.meta, decorations.pathmorphing, positioning}
\usetikzlibrary{decorations.pathreplacing}

\allowdisplaybreaks
\numberwithin{equation}{section} 
\SelectTips{cm}{}

\graphicspath{{./Figures/}}

\theoremstyle{definition}
\newtheorem{definition}{Definition}[section]
\newtheorem{remark}[definition]{Remark}

\newtheorem{fact}[definition]{Fact}

\theoremstyle{plain}
\newtheorem{theorem}[definition]{Theorem}
\newtheorem*{theorem*}{Theorem}

\newtheorem{lemma}[definition]{Lemma}
\newtheorem{corollary}[definition]{Corollary}

\newcommand{\calA}{\mathcal{A}}
\newcommand{\calB}{\mathcal{B}}

\newcommand{\calL}{\mathcal{L}}

\newcommand{\calM}{\mathcal{M}}
\newcommand{\calC}{\mathcal{C}}

\newcommand{\Sp}{\operatorname{Sp}}

\newcommand{\id}{\operatorname{id}}
\newcommand{\Ker}{\operatorname{Ker}}
\newcommand{\Tr}{\operatorname{Tr}}

\renewcommand{\Im}{\operatorname{Im}}

\newcommand{\col}{\operatorname{col}}

\makeatletter % To allow usage of special caracters in these macros.
\newcount\countitems
\newcommand{\tree}[1]{%
  \def\mylist{#1}%
  \countitems=0
  \@for\x:=\mylist\do{\advance\countitems by 1\relax}%
  \ifnum\countitems=3
  \def\first{}\def\second{}\def\third{}%
  \@for\x:=#1\do{%
    \ifx\first\empty
      \edef\first{\x}%
    \else\ifx\second\empty
      \edef\second{\x}%
    \else
      \edef\third{\x}%
    \fi\fi
  }%
\begin{tikzpicture}[baseline=8pt ,scale = 0.5]
\draw [thick,color=black] (1,1.5)-- (1,0.75);
\draw [thick,color=black] (1,0.75)-- (1-0.866*0.75,0.75-.37);
\draw [thick,color=black] (1,0.75)-- (1+0.866*0.75,0.75-.37);
\draw [black, fill = black] (1,0.75) circle (.5ex);
\draw[color=black] (1,1.5+0.3) node {\small $\second$};
\draw[color=black] (1-0.866*0.75-0.3,0.75-.37-0.3) node {\small $\first$};
\draw[color=black] (1+0.866*0.75+0.3,0.75-.37-0.3) node {\small $\third$};
\end{tikzpicture}%
\fi
\ifnum\countitems=4
  \def\first{}\def\second{}\def\third{}\def\fourth{}%
  \@for\x:=#1\do{%
    \ifx\first\empty
      \edef\first{\x}%
    \else\ifx\second\empty
      \edef\second{\x}%
    \else\ifx\third\empty
      \edef\third{\x}%
    \else
      \edef\fourth{\x}%
    \fi\fi\fi
  }%
\begin{tikzpicture}[baseline=8pt ,scale = 0.5]
\draw [thick,color=black] (0,1.25)-- (0,0.25);
\draw [thick,color=black] (0,0.75)-- (1,0.75);
\draw [thick,color=black] (1,1.25)-- (1,0.25);
\draw [black, fill = black] (0,0.75) circle (.5ex);
\draw [black, fill = black] (1,0.75) circle (.5ex);
\draw[color=black] (0,0) node {\small $\first$};
\draw[color=black] (0,1.6) node {\small $\second$};
\draw[color=black] (1,1.6) node {\small $\third$};
\draw[color=black] (1,0) node {\small $\fourth$};
\end{tikzpicture}
\fi
 \ifnum\countitems=5
    \def\first{}\def\second{}\def\third{}\def\fourth{}\def\fifth{}%
    \@for\x:=#1\do{%
      \ifx\first\empty \edef\first{\x}%
      \else\ifx\second\empty \edef\second{\x}%
      \else\ifx\third\empty \edef\third{\x}%
      \else\ifx\fourth\empty \edef\fourth{\x}%
      \else \edef\fifth{\x}%
      \fi\fi\fi\fi
    }%
\begin{tikzpicture}[baseline=8pt ,scale = 0.5]
\draw [thick,color=black] (0,1.25)-- (0,0.25);
\draw [thick,color=black] (0,0.75)-- (2,0.75);
\draw [thick,color=black] (2,1.25)-- (2,0.25);
\draw [thick,color=black] (1,0.75)-- (1,1.5);
\draw [black, fill = black] (0,0.75) circle (.5ex);
\draw [black, fill = black] (1,0.75) circle (.5ex);
\draw [black, fill = black] (2,0.75) circle (.5ex);
\draw[color=black] (0,0) node {\small $\first$};
\draw[color=black] (0,1.6) node {\small $\second$};
\draw[color=black] (1,1.75) node {\small $\third$};
\draw[color=black] (2,1.6) node {\small $\fourth$};
\draw[color=black] (2,0) node {\small $\fifth$};
\end{tikzpicture}
 \fi
}

\makeatother

\begin{document}

\begin{abstract}
In this note we show that the image of the third Johnson homomorphism $\tau_3$ coincides with the kernel of Morita trace map for the case of a surface of genus $g$ with one boundary component $\Sigma_{g,1}$, when $g \geq 6$.
Moreover, as a consequence, we get that the image of
$\tau_3$ on the handlebody subgroups coincides with the intersection of kernels of the Morita trace map and the Lagrangian trace maps.
\end{abstract}

\maketitle

\section{Introduction}
Let $\Sigma_{g,1}$ be a surface of genus $g$ with one boundary component. Denote its first homology group $H$, its mapping class group $\mathcal{M}_{g,1}$ and the Torelli subgroup of the latter $\mathcal{I}_{g,1}$. We will most of the time omit the subscripts when the genus is clear. The Johnson filtration $(\mathcal{J}_k)_{k \geq 1}$ of the Torelli group is a famous subject of study and its associated graded embeds in the Lie algebra of symplectic derivations $D(H)$. More precisely the map
\[\oplus \tau_k: \;\bigoplus_{k \geq 1} \mathcal{J}_k/\mathcal{J}_{k+1} \longrightarrow \bigoplus_{k\geq 1} D_k(H)
\]
is a $\Sp(H)$-equivariant Lie ring homomorphism. The image of $\tau_k$ is now more or less well understood after rationalization, but remains a mystery over the integers. For example, torsion in the cokernel is found in even degrees in \cite{Fa25}.

The Enomoto-Satoh trace \cite{ES14}, which generalizes the Morita trace \cite{Mor93}, vanishes on the image of $\tau := \oplus \tau_k$. Actually in degree $3$, the Morita trace will be sufficient. Let us recall its definition (note that we divide by a factor $2$ compared with the original definition).

Let $T(H)$ be the tensor algebra and $S(H)$ the symmetric tensor algebra. The free Lie algebra $\calL(H)$ embeds in the tensor algebra. Recall that $H$ is identified to its dual $H^*$ through the map $x \mapsto \omega(x,-)$, where $\omega$ is the intersection form. The Morita trace
\[
\text{Tr}^M: \; D_3(H) \longrightarrow S^3(H)
\]
is defined as $\frac{1}{2}$ times the composition: 

\[
D_3(H) \longrightarrow H \otimes \mathcal{L}_4(H) \longrightarrow H\otimes T_4(H) \xrightarrow{\sim} H^*\otimes T_4(H)\xrightarrow{\operatorname{cont}}T_3(H) \xrightarrow{\operatorname{proj}}S^3(H)
\] where the map $\operatorname{cont}$ denotes the contraction of $H^*$ with the first tensor in $T_4(H)$.

%%%%%%%%%%%%%%%%%%%%%%%%%%%%%%%%%%%%%%%%%%%%%%%%%%%%%%%%%%%%%%%%
\begin{theorem}
\label{thm:Morita-trace}
For any $g \geq 6$, $\Im(\tau_3) = \Ker(\Tr^M)$, and more precisely, the Morita trace induces an isomorphism $D_3(H)/\Im(\tau_3) \simeq S^3(H) $.
\end{theorem}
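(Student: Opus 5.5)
The plan is to prove three things in turn: the inclusion $\Im(\tau_3)\subseteq \Ker(\Tr^M)$, the surjectivity of $\Tr^M$ onto $S^3(H)$ over $\mathbb{Z}$, and the reverse inclusion $\Ker(\Tr^M)\subseteq\Im(\tau_3)$.

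\emph{Step 1: the inclusion.} This is Morita's theorem \cite{Mor93}. One point needs checking: after dividing by $2$, the trace must still take integral values. Concretely, the image of $D_3(H)$ in $S^3(H)$ under the undivided composition should lie in $2S^3(H)$. I would verify this on generators of $D_3(H)$. Using the description of $D_3(H)$ through tree Lie algebras, these generators are the trees \tree{a,b,c,d} with four leaves in $H$, plus the extra symmetric-type elements that appear over $\mathbb Z$ in degree $3$. On a tree the contraction produces the symmetric terms twice, by the antisymmetry of $\omega$ combined with the symmetric projection.

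\emph{Step 2: surjectivity.} I would compute $\Tr^M$ on explicit elements. For a tree with leaves $a,b,c,d$, the trace gives (up to sign) combinations such as $\omega(a,b)\,c d \cdot(\ldots)$ plus terms of the form $a\,b\,c$ coming from the degenerate symmetric generators. The goal is to exhibit, for a symplectic basis, elements mapping to each monomial $x_ix_jx_k$. Degenerate trees with repeated leaves, such as $[a_i,[a_i,[a_i,b_i]]]$-type derivations, should produce cubes like $a_i^3$, while distinct indices give mixed monomials. Since $S^3(H)$ is spanned by monomials in the basis, this shows $\Tr^M$ is onto over $\mathbb Z$. It follows that $D_3/\Ker(\Tr^M)\cong S^3(H)$, so the displayed isomorphism is equivalent to $\Im\tau_3=\Ker\Tr^M$.

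\emph{Step 3: the reverse inclusion.} Rationally this is known from Hain, Asada--Nakamura, and Morita, so the issue is purely integral. I would proceed in three parts.
\begin{enumerate}
\item Produce explicit elements of $\mathcal J_3$, namely commutators $[\mathcal I,\mathcal J_2]$ and $[[\mathcal I,\mathcal I],\mathcal I]$. Because $\tau$ is a Lie ring homomorphism, $\tau_3$ of these elements equals the brackets $[\tau_1(\cdot),\tau_2(\cdot)]$ in $D(H)$. Together with the known integral image of $\tau_2$ (Morita, Yokomizo), this gives a computable sublattice of $\Im\tau_3$.
\item Show this sublattice already spans $\Ker(\Tr^M)$. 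Using $\Sp(H)$-equivariance, it suffices to generate $\Ker\Tr^M$ as a $\mathbb Z[\Sp]$-module. For that I would find a finite list of $\Sp(2g,\mathbb Z)$-orbit representatives of trees with leaves in a symplectic basis, organized by the pattern of repeated indices and symplectic pairs among the four or five leaves.
\item For each pattern, write the basis element modulo the image as a combination of brackets and lower-pattern terms, and then do a final check that the leftover quotient is detected exactly by $\Tr^M$.
\end{enumerate}
The hypothesis $g\ge 6$ enters here: one needs enough disjoint symplectic pairs to move every leaf pattern to a standard position disjoint from auxiliary indices used in the brackets.

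I expect the main obstacle to be this integral generation step. Rational surjectivity does not rule out $2$- or $3$-torsion in $\Ker\Tr^M/\Im\tau_3$. The patterns with repeated leaves (e.g.\ $a_1,a_1,b_1,\ldots$) are where divisibility issues arise, and they require carefully chosen elements of $\mathcal J_3$, possibly bounding-pair maps and their commutators, whose $\tau_3$ values have the right odd coefficients. If bracket elements do not suffice, my fallback is to use $\tau_3$ of explicit products of Dehn twists, computed via the Johnson/Morita formulas for separating twists.
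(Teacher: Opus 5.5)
Your architecture matches the paper's: Morita for $\Im(\tau_3)\subseteq\Ker(\Tr^M)$, explicit preimages for surjectivity, and for the reverse inclusion brackets from $\tau_3([\mathcal{J}_2,\mathcal{I}])$ and $\tau_3(\Gamma_3\mathcal{I})$, $\Sp(H)$-invariance of $\Im(\tau_3)$, and a case analysis by contraction pattern. But Steps 1--2 misidentify the objects involved, and Step 3 only announces the part that carries the content.

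\emph{Step 1.} In degree $3$ the generators are five-leaf trees \tree{a,b,c,d,e}, not four-leaf ones. Since the degree is odd, no half-integral generators occur; those, like $\frac12\tree{a,b,a,b}$, live in degree $2$. Integrality of the halved trace is then immediate from $\Tr^M(\tree{a,b,c,d,e})=\omega(a,e)bcd-\omega(a,d)bce+\omega(b,d)ace-\omega(b,e)acd$.

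\emph{Step 2.} Your proposed source of cubes fails. The trace always contracts a label of one end vertex with a label of the other. So a tree labelled only by $a_i,b_i$ whose trace contains $a_i^3$ must have labels $a_i,a_i,a_i,a_i,b_i$. One end vertex then carries two copies of $a_i$, and the derivation vanishes by AS. The missing idea is a spectator pair of a fresh color $f$ not occurring in the monomial. For pairwise non-contracting $x,y,z\in\mathcal{C}$ one gets $\tree{a_f,x,y,z,b_f}\mapsto xyz$, and likewise $\tree{a_f,a_k,b_k,l,b_f}\mapsto a_kb_kl$, $\tree{a_f,a_k,b_k,a_k,b_f}\mapsto a_k^2b_k$, $\tree{a_f,b_k,a_k,b_k,b_f}\mapsto a_kb_k^2$. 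Choosing $f$ as a function of the colors gives a homomorphism $s\colon S^3(H)\to D_3(H)$ with $\Tr^M\circ s=\id$ once $g\geq 4$.

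\emph{Step 3.} This is where the theorem actually lives. A list of normal forms plus a ``final check that the leftover quotient is detected by $\Tr^M$'' is exactly what has to be proved; you state it as a goal but supply neither the normal forms nor the identities. The section $s$ is what makes it tractable. Since elementary trees generate $D_3(H)$, $\Ker(\Tr^M)$ is generated by the elements $T-s(\Tr^M(T))$: if $d=\sum n_T T$ has trace zero, then $d=\sum n_T\,(T-s(\Tr^M(T)))$. The paper shows each such element lies in $\Im(\tau_3)$, working through trees with $0,1,\dots,5$ contractions. The tools are:
\begin{enumerate}
\item[(a)] Trees without contractions between distant leaves are double brackets of degree-one trees through a fresh color: $\tree{a,b,c,d,e}=[\tree{a,b,f},[\tree{f^*,c,f},\tree{f^*,d,e}]]$.
\item[(b)] The identity $\tree{x,a_1,b_1,y,z}=[\tree{x,a_1,b_1,a_k}+\frac12\tree{x,a_k,x,a_k},\tree{b_k,y,z}]-\tree{x,a_k,x,y,z}$ holds, with the first bracket entry in $\Im(\tau_2)$ and $k$ avoiding the colors present. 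This is where $g\geq 6$ is used.
\item[(c)] Elementary symplectic matrices are applied to trees already in the image. For example, $G\colon a_f\mapsto a_f-a_{f'}$, $b_{f'}\mapsto b_{f'}+b_f$ applied to $\tree{a_f,x,y,z,b_{f'}}$ gives, after discarding terms of type (a), $\tree{a_f,x,y,z,b_f}-\tree{a_{f'},x,y,z,b_{f'}}\in\Im(\tau_3)$.
\item[(d)] AS and IHX relations are used to lower the number of contractions.
\end{enumerate}
Move (c) is what disposes of the odd-coefficient worries you raise. It also shows that the arbitrary choices in $s$ are immaterial modulo $\Im(\tau_3)$, which justifies reducing to $\mathfrak{N}_g$-orbit representatives even though $s$ is not equivariant.

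No Dehn-twist computations are needed. Only the easy inclusions $[D_1(H),[D_1(H),D_1(H)]]+[\Im(\tau_2),D_1(H)]\subseteq\Im(\tau_3)$ are used; in fact $\Im(\tau_3)=[D_1(H),[D_1(H),D_1(H)]]$ for $g\geq3$ by \cite{FMS26}.
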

%%%%%%%%%%%%%%%%%%%%%%%%%%%%%%%%%%%%%%%%%%%%%%%%%%%%%%%%%%%%%%%%

In this paper, we shall use the tree description of $D(H)$. We refer to \cite[Section 2.1]{FM22} for a precise overview, and to \cite{Lev06} and \cite{CST12} for the historical proofs. A quick reminder is given in Appendix \ref{S:AppendixA}. We will write sum of trees subject to the $\operatorname{AS},\operatorname{IHX}$ and $\operatorname{multilinearity}$ relations to describe elements of $D(H)$.

Fixing a Heegaard splitting of $S^3$, one fixes two handlebodies and we define $\calA$ and $\calB$ to be the subgroups of $\calM_{g,1}$ of mapping classes that extend to the respective handlebodies. This also defines two Lagrangians $A$ and $B$ such that $A \oplus B = H$. In the sequel, we fix a basis $\calC = a_1, a_2, \dots, a_g, b_1, b_2, \dots, b_g$ such that the $a$'s and the $b$'s generate $A$ and $B$ respectively.

Let $W_3(a^{\geq 1}b) \subset D_3(H)$ be the subspace generated by trees that contain at least one label $a$ and $W(ab^4)\subset D_3(H)$ be the subspace generated by trees with one $a$ and four $b$'s in their labels. 
In \cite{Rib25} the second author gave an \textit{antisymmetric Lagrangian trace map}
\[
\Tr^A_\Lambda: W_3(a^{\geq 1}b) \longrightarrow \Lambda^3 B
\]
defined as the composition of maps:
\begin{equation*}
\label{eq:trace-map-A}
\begin{tikzcd}[column sep=4mm]
W_3(a^{\geq 1}b) \ar[r,"\pi_{14}"] & W(ab^4) \ar[r,"\eta_3"] & A\otimes \mathcal{L}_4(B) \ar[r,"i"] & A\otimes T_4(B) \ar[r,"\omega_{1,2}"] & T_3(B) \ar[r] & \Lambda^3 B.
\end{tikzcd}
\end{equation*} where $\pi_{14}$ kills trees with two $a$'s or more, and $\eta_3$ sends a tree to its expansion through the root colored by the letter $a$ (see the reference for a more precise definition). Analogously, we define $\Tr^B$ exchanging $a$'s and $b$'s, and the Lagrangians $A$ and $B$.

Then, by \cite[Prop. 1.1]{Rib25} and Theorem \ref{thm:Morita-trace} we also have an analogous result for the image by $\tau_3$ of the handlebody subgroups $\mathcal{JA}_3:= \mathcal{J}_3 \cap \calA$, $\mathcal{JB}_3:= \mathcal{J}_3 \cap \calB$ and  $\mathcal{JAB}_3:= \mathcal{J}_3 \cap \calA \cap \calB$ of the third term of the Johnson filtration $\mathcal{J}_3$. To be more precise,

%%%%%%%%%%%%%%%%%%%%%%%%%%%%%%%%%%%%%%%%%%%%%%%%%%%%%%%%%%%%%%%%
\begin{theorem}
\label{thm:Lagrangian-trace}
For any $g \geq 6$, we have that
\[
\begin{array}{c}
\tau_3(\mathcal{JA}_3) =\Ker(\Tr^A_\Lambda)\cap \Ker(\Tr^M),\qquad
\tau_3(\mathcal{JB}_3) =\Ker(\Tr^B_\Lambda)\cap \Ker(\Tr^M), \\[1ex]
\tau_3(\mathcal{JAB}_3) =\Ker(\Tr^A_\Lambda)\cap \Ker(\Tr^B_\Lambda)\cap \Ker(\Tr^M).
\end{array}
\]
\end{theorem}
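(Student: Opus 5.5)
The plan is to deduce Theorem \ref{thm:Lagrangian-trace} formally from Theorem \ref{thm:Morita-trace} together with \cite[Prop. 1.1]{Rib25}. I read the latter as describing the images of the handlebody subgroups inside $\Im(\tau_3)$:
\[
\tau_3(\mathcal{JA}_3)=\Im(\tau_3)\cap \Ker(\Tr^A_\Lambda),\qquad \tau_3(\mathcal{JB}_3)=\Im(\tau_3)\cap\Ker(\Tr^B_\Lambda),
\]
\[
\tau_3(\mathcal{JAB}_3)=\Im(\tau_3)\cap\Ker(\Tr^A_\Lambda)\cap\Ker(\Tr^B_\Lambda).
\]
Here $\Ker(\Tr^A_\Lambda)$ is understood as a subspace of $W_3(a^{\geq 1}b)$, and analogously for $B$.

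There is one point to check before this reading is usable. It must hold that $\tau_3(\mathcal{JA}_3)\subset W_3(a^{\geq 1}b)$, i.e. that the trees with all labels in $B$ (the tensor $\Lambda$-part $D_3(B)$) are killed. This should follow from the fact that elements of $\calA$ preserve the kernel of $\pi_1\Sigma\to\pi_1(\text{handlebody})$. Presumably this is already built into \cite[Prop. 1.1]{Rib25}, and I would cite it rather than reprove it.

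Given this, the argument is one line for each case. Substituting $\Im(\tau_3)=\Ker(\Tr^M)$ from Theorem \ref{thm:Morita-trace} (valid for $g\geq 6$) into each identity gives the three claimed equalities. In each equality $\Ker(\Tr^M)$ is restricted to the relevant domain $W_3(a^{\geq 1}b)$, $W_3(ab^{\geq 1})$, or their intersection.

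The main obstacle is that \cite[Prop. 1.1]{Rib25} may not be stated as an intersection with $\Im(\tau_3)$. It may instead be stated in terms of an explicit generating set, or only up to a finite-index or rational statement. In that case I would reprove the needed inclusion
\[
\Im(\tau_3)\cap\Ker(\Tr^A_\Lambda)\subset \tau_3(\mathcal{JA}_3)
\]
directly. The reverse inclusions are immediate, since $\Tr^M$ vanishes on all of $\Im(\tau_3)$ and $\Tr^A_\Lambda$ vanishes on $\tau_3(\mathcal{JA}_3)$ by \cite{Rib25}.

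For the direct argument, I would split $W_3(a^{\geq 1}b)$ by the number of $a$-labels. Trees with at least two $a$'s would be realized by explicit elements of $\mathcal{JA}_3$, for example commutators of $\mathcal{JA}_2$ with $\calA\cap\mathcal{I}$, or twists along curves bounding in the handlebody. In the $ab^4$ part, combining $\Tr^M=0$ with $\Tr^A_\Lambda=0$ should cut the element down to a sum of brackets of handlebody elements. I would control this using the $\mathrm{GL}(B)$-module decomposition of $W(ab^4)$, verifying that the two trace conditions exactly cut out the span of these brackets. This is where $g\geq 6$ would enter again.
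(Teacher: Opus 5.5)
Your proposal is correct and matches the paper: the paper gives a one-line deduction from \cite[Prop. 1.1]{Rib25} together with Theorem~\ref{thm:Morita-trace}, substituting $\Im(\tau_3)=\Ker(\Tr^M)$ (valid for $g\geq 6$) into the description of the handlebody images as $\Im(\tau_3)$ intersected with the relevant Lagrangian trace kernels. Your fallback plan of reproving the inclusion $\Im(\tau_3)\cap\Ker(\Tr^A_\Lambda)\subset\tau_3(\mathcal{JA}_3)$ directly is not needed, since the paper relies on the citation exactly as you do.
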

\begin{remark}
Note that there is a slight abuse of notation in the above theorem. The Lagrangian traces $\Tr^A_\Lambda$ and $\Tr^B_\Lambda$ are not defined on the whole space $D_3(H)$ but only on subspaces. Hence to check that a derivation $d$ is in $\tau_3(\mathcal{JA}_3)$, for example, the reader should first check that $d$ belongs to the domain of $\Tr^A_\Lambda$. 
\end{remark}
%%%%%%%%%%%%%%%%%%%%%%%%%%%%%%%%%%%%%%%%%%%%%%%%%%%%%%%%%%%%%%%%

\noindent In the next section we prove Theorem \ref{thm:Morita-trace}.

\section{Proof}

To prove Theorem \ref{thm:Morita-trace}, we shall exhibit an inverse $s$ to the trace map. Let us define it. Let $\mathcal{P}_{\leq n}(X) := \{ A \subseteq X, \textit{ s.t. } \left| A \right|\leq n  \}$. Pick any function $f : \mathcal{P}_{\leq 3}(\{ 1, \dots g \}) \rightarrow \{ 1, \dots g \}$ such that for any $i,j,k \in \{ 1, \dots g \}$, we have $f(\{i,j,k\}) \notin \{i,j,k\}$. It is an elementary fact that such function exists whenever $g \geq 4$.

Using Levine's diagrammatic calculus, recall that (for $a,b,c,d,e \in H$)

\[
\Tr^M \left( \tree{a,b,c,d,e}\right) = \omega(a,e)bcd - \omega (a,d) bce +\omega(b,d)ace - \omega(b,e) acd \in S^3(H).
\]

The number of contractions in an elementary tensor $xyz \in S^3(H)$ is just the number of unordered pairs such that their contractions via $\omega$ is non trivial. The number of contractions in an elementary tree $\tree{a,b,c,d,e}$ is defined in the same way. Let $\col(a_k) = \col(b_k) := k$.

Order the basis $\mathcal{C}$ of $H$, (say by $a_1 \leq b_1 \leq a_2 \dots)$. Define a group homomorphism $s : S^3(H) \rightarrow D_3(H)$ in the following way: 
\begin{itemize}
    \item For any $x\leq y \leq z \in \mathcal{C}$ with no contraction, set
$$s(xyz)= \tree{a_f,x,y,z,b_f} \text{ \hspace{1mm}with $f := f(\{\col(x),\col(y),\col(z)\})$}.$$
\item If there is a single contraction (say $a_k$ with $b_k$) and a third letter $l$, set $$s(a_kb_kl)= \tree{a_f,a_k,b_k,l,b_f} \text{ \hspace{1mm}with $f := f(\{k,\operatorname{col(l)}\})$}.$$
\item If there are two contractions (say with color $k$), set
\[
\begin{aligned}
s(a_ka_kb_k) & = \tree{a_f,a_k,b_k,a_k,b_f}, \\
s(a_kb_kb_k) & = \tree{a_f,b_k,a_k,b_k,b_f},
\end{aligned}
\qquad \text{with } f := f(\lbrace k\rbrace).
\]
\end{itemize}

\noindent Because of the contraction patterns imposed by the map $f$, it is clear that $\Tr^M \circ s = \id_{S^3(H)}$.

%%%%%%%%%%%%%%%%%%%%%%%%%%%%%%%%%%%%%%%%%%%%%%%%%%%%%%%%%%%%%%%%
\begin{corollary}
The Morita trace $\Tr^M$ is surjective when $g \geq 4$.
\end{corollary}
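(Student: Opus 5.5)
The plan is to deduce surjectivity directly from the section $s$ constructed just above, by verifying the identity $\Tr^M\circ s=\id_{S^3(H)}$ on a $\mathbb{Z}$-basis of $S^3(H)$. Since $s$ is defined on the monomials $xyz$ with $x\le y\le z$ in the ordered basis $\calC$, and these monomials form a $\mathbb{Z}$-basis of $S^3(H)$, it suffices to compute $\Tr^M(s(xyz))$ for each such monomial. The map $s$ requires a function $f$ on $\mathcal{P}_{\le 3}(\{1,\dots,g\})$ with $f(S)\notin S$. For $g\ge 4$ such a function exists because $|S|\le 3<g$, so the construction applies in this range.

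I will apply the formula
\[
\Tr^M\left(\tree{a,b,c,d,e}\right)=\omega(a,e)bcd-\omega(a,d)bce+\omega(b,d)ace-\omega(b,e)acd
\]
to the tree $\tree{a_f,x,y,z,b_f}$. The first term gives $\omega(a_f,b_f)\,xyz=xyz$. The other three terms involve $\omega(a_f,z)$, $\omega(x,z)$ and $\omega(x,b_f)$. The terms containing $a_f$ or $b_f$ vanish, because $f$ differs from the colours of $x$, $y$ and $z$. The term $\omega(x,z)$ has to be handled case by case:
\begin{enumerate}
\item \emph{No contraction.} Here $\omega(x,z)=0$ by assumption, so the image is $xyz$.
\item \emph{One contraction.} The tree is $\tree{a_f,a_k,b_k,l,b_f}$, and the relevant pairing is $\omega(a_k,l)$. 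This vanishes because $l$ does not contract with $a_k$: the only contraction is between $a_k$ and $b_k$. If instead $l=a_k$, we are in the two-contraction case.
\item \emph{Two contractions.} For $\tree{a_f,a_k,b_k,a_k,b_f}$ the pairing is $\omega(a_k,a_k)=0$. For $\tree{a_f,b_k,a_k,b_k,b_f}$ it is $\omega(b_k,b_k)=0$.
\end{enumerate}
In every case $\Tr^M(s(m))=m$ for each basis monomial $m$. Hence $\Tr^M\circ s=\id$, and $\Tr^M$ is surjective.

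The only delicate point is the one-contraction case, where the monomial must be written in the order $a_k,b_k,l$ so that the contracting pair sits in positions two and three of the tree. That pair then appears only through $\omega(b,c)$, which does not occur in the trace formula. I also need to check that $s$ is well defined as a map on the basis, meaning each monomial falls into exactly one of the cases. This is a routine check and presents no real obstacle.
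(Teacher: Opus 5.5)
Your proposal is correct and follows the paper's own route: the paper also obtains surjectivity from the section $s$ and the identity $\Tr^M\circ s=\id_{S^3(H)}$, which it calls clear ``because of the contraction patterns imposed by $f$.'' Your case-by-case check that, of the four pairings in the trace formula, only $\omega(a_f,b_f)$ survives in the no-contraction, one-contraction and two-contraction cases simply writes out that step; this exhausts the monomials of $S^3(H)$, since three contractions are impossible there.
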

%%%%%%%%%%%%%%%%%%%%%%%%%%%%%%%%%%%%%%%%%%%%%%%%%%%%%%%%%%%%%%%%

It remains to prove the converse, i.e. that $s : S^3(H) \rightarrow D_3(H)/\Im(\tau_3)$ induces a left inverse to the map $\Tr^M$. Luckily, the next lemma tells us exactly how to prove that a derivation is in the image of $\tau_3$.

%%%%%%%%%%%%%%%%%%%%%%%%%%%%%%%%%%%%%%%%%%%%%%%%%%%%%%%%%%%%%%%%
\begin{lemma}
For any $g \geq 3$,
\[
\Im(\tau_3) = \tau_3([\mathcal{J}_2,\mathcal{I}]) = \tau_3(\Gamma_3 \mathcal{I}) = \left[ D_1(H),\left[D_1(H),D_1(H) \right] \right].
\]
\end{lemma}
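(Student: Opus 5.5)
We will prove the chain of equalities by establishing a cycle of inclusions. The equalities to prove are
\[
\Im(\tau_3)=\tau_3([\mathcal{J}_2,\mathcal{I}])=\tau_3(\Gamma_3\mathcal{I})=[D_1(H),[D_1(H),D_1(H)]].
\]

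\textbf{Step 1: the easy inclusions.} Since $\Gamma_3\mathcal{I}=[[\mathcal{I},\mathcal{I}],\mathcal{I}]$ and $[\mathcal{I},\mathcal{I}]\subseteq \mathcal{J}_2$, we have $\Gamma_3\mathcal{I}\subseteq [\mathcal{J}_2,\mathcal{I}]\subseteq \mathcal{J}_3$. Applying $\tau_3$ gives
\[
\tau_3(\Gamma_3\mathcal{I})\subseteq \tau_3([\mathcal{J}_2,\mathcal{I}])\subseteq \Im(\tau_3).
\]
Next, $\oplus\tau_k$ is a Lie ring homomorphism on the graded quotients, and $\tau_1$ is surjective onto $D_1(H)\cong\Lambda^3H$ (Johnson). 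Hence for $x,y,z\in\mathcal{I}$ we get $\tau_3([[x,y],z])=[[\tau_1x,\tau_1y],\tau_1z]$. The group $\Gamma_3\mathcal{I}$ modulo $\mathcal{J}_4$ is generated by such triple commutators, so $\tau_3(\Gamma_3\mathcal{I})=[D_1,[D_1,D_1]]$. By the Jacobi identity it does not matter which bracketing we use.

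\textbf{Step 2: the key inclusion $\Im(\tau_3)\subseteq[D_1,[D_1,D_1]]$.} This closes the cycle. We do it in two parts.

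First, we use Johnson's theorem that for $g\geq 3$ the Torelli group is generated by genus-one BP maps. Together with Morita's/Johnson's description of $\mathcal{J}_2$ (the Johnson kernel, generated by separating twists), we aim to show
\[
\mathcal{J}_3\subseteq [\mathcal{J}_2,\mathcal{I}]\cdot\mathcal{J}_4 .
\]
Equivalently, $\mathcal{J}_3/\mathcal{J}_4$ is spanned by images of $[\mathcal{J}_2,\mathcal{I}]$. We would cite the known result (Morita, or Hain's rational version made integral for degree $3$) that $\mathcal{J}_3/\mathcal{J}_4$ is generated by brackets from degrees $1$ and $2$.

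Second, we show $\tau_2(\mathcal{J}_2)\subseteq[D_1,D_1]$ up to the relevant part. Here $\tau_2(\mathcal{J}_2)$ contains $[D_1,D_1]$, but it may be slightly larger, by $2$-torsion-type elements (Morita/Yokomizo describe $\Im\tau_2$ explicitly). So we then need to check that the bracket of this extra part with $D_1$ already lies in $[D_1,[D_1,D_1]]$.

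To do this, we take tree generators of $\Im\tau_2$ (the H-shaped trees with one repeated label type, e.g. halves of symmetric elements) and bracket them with a tripod. We then show, using the IHX/AS relations and $g\geq3$ to choose auxiliary colors, that the result is a combination of iterated brackets of tripods.

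\textbf{Main obstacle.} The main obstacle is the integral statement that $\mathcal{J}_3\subseteq[\mathcal{J}_2,\mathcal{I}]\mathcal{J}_4$ and the torsion discrepancy between $\Im\tau_2$ and $[D_1,D_1]$. Our first attempt would be a direct diagrammatic computation: express a spanning set of $D_3(H)\cap\Im\tau_3$ explicitly as Lie brackets of tripods, choosing colors disjoint from the labels, which is possible since $g\geq3$.
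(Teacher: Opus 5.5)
\textbf{Verdict: there is a genuine gap in Step 2, part (a).}

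Your Step 1 is correct. The inclusions $\Gamma_3\mathcal{I}\subseteq[\mathcal{J}_2,\mathcal{I}]\subseteq\mathcal{J}_3$ hold, and $\tau_3(\Gamma_3\mathcal{I})=[D_1(H),[D_1(H),D_1(H)]]$ follows because $\oplus\tau_k$ is a Lie ring map and $\tau_1$ is onto.

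The gap is the inclusion $\mathcal{J}_3\subseteq[\mathcal{J}_2,\mathcal{I}]\cdot\mathcal{J}_4$. This is the entire content of the lemma, and you give neither a proof nor a usable reference.
\begin{itemize}
\item Hain's theorem is a statement over $\mathbb{Q}$. It only tells you that $[D_1(H),[D_1(H),D_1(H)]]$ has finite index in $\Im(\tau_3)$. ``Making it integral'' is exactly what has to be proved, and there is no formal reason for that finite quotient to vanish, especially given the $2$-torsion discrepancy between $\Im(\tau_2)$ and $[D_1(H),D_1(H)]$ that you yourself point out.
\item Johnson's generating sets (genus-one BP maps for $\mathcal{I}$, separating twists for $\mathcal{J}_2$) say nothing about generators of $\mathcal{J}_3$, so they do not give (a) either.
\end{itemize}
The paper takes precisely this missing input from \cite{FMS26}:
\begin{itemize}
\item Corollary 4.6 there gives $\mathcal{J}_3=\mathcal{J}_4\cdot\Gamma_3\mathcal{I}$.
\item Theorem A there gives $[\mathcal{J}_2,\mathcal{I}]=\Gamma_3\mathcal{I}$.
\end{itemize}
With these two facts, $\Im(\tau_3)=\tau_3(\Gamma_3\mathcal{I})=\tau_3([\mathcal{J}_2,\mathcal{I}])$ is immediate. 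Your part (b), the torsion discrepancy between $\Im(\tau_2)$ and $[D_1(H),D_1(H)]$, is then absorbed by Theorem A at the group level, so no diagrammatic computation is needed.

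Your fallback does not close the gap either. Writing ``a spanning set of $\Im(\tau_3)$'' as brackets of tripods presupposes that you already know $\Im(\tau_3)$, which is what is being determined. The only a priori upper bound available is $\Ker(\Tr^M)$, from Morita's vanishing theorem. Proving $\Ker(\Tr^M)\subseteq[D_1(H),[D_1(H),D_1(H)]]$ amounts to the main theorem of this paper combined with your part (b). The paper's computations for that use auxiliary colors and need $g\geq 6$, so this route could not give the lemma for $g=3,4,5$. Part (b) itself is also only sketched, not carried out.
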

%%%%%%%%%%%%%%%%%%%%%%%%%%%%%%%%%%%%%%%%%%%%%%%%%%%%%%%%%%%%%%%%

%%%%%%%%%%%%%%%%%%%%%%%%%%%%%%%%%%%%%%%%%%%%%%%%%%%%%%%%%%%%%%%%
\begin{proof}
In \cite{FMS26}, it is proven in Theorem A that $[\mathcal{J}_2,\mathcal{I}] = \Gamma_3 \mathcal{I}$ and in Corollary 4.6 that $\mathcal{J}_3 = \mathcal{J}_4 \cdot \Gamma_3 \mathcal{I}$.
\end{proof}
%%%%%%%%%%%%%%%%%%%%%%%%%%%%%%%%%%%%%%%%%%%%%%%%%%%%%%%%%%%%%%%%

Since elementary trees generate $D_3(H)$, we shall now prove that $T - s(\Tr^M (T))$ belongs to $\Im(\tau_3)$ for any elementary tree $T:= \tree{a,b,c,d,e}$. For this, we distinguish cases according to the number of contractions.
Let $\omega_i\in Sp(H)$ be the element that sends $a_i$ to $b_i$ and $b_i$ to $-a_i$ and leaves all the other basis elements of $H$ fixed. We define $W_g$ to be the subgroup of $\Sp(H)$ generated by $\omega_1,\omega_2, \ldots, \omega_g$, and set $\mathfrak{N}_g=W_g\cdot \mathfrak{S}_g$
In all cases we take $g\geq 6$. To simplify the computations, we first prove that we can use the action of $\mathfrak{N}_g$. Two leaves are called \emph{distant} if they are not related to the same trivalent vertex. We also set $a_k^* = b_k$ and $b_k^* = a_k$.

%%%%%%%%%%%%%%%%%%%%%%%%%%%%%%%%%%%%%%%%%%%%%%%%%%%%%%%%%%%%%%%%
\begin{lemma}
\label{lema:trees-without-contract}
%%%%%%%%%%%%%%%%%%%%%%%%%%%%%%%%%%%%%%%%%%%%%%%%%%%%%%%%%%%%%%%%
The trees of $D_3(H)$ without contractions between distant leaves are in $\Im(\tau_3)$. That is, the following trees are in $\Im(\tau_3)$.
\[
i)\; \tree{a,b,c,d,e}, \qquad ii)\; \tree{a_1,b_1,c,d,e}, \qquad iii)\; \tree{a_1,b_1,c,a_2,b_2}, \]  when there is no contraction except the explicit ones.
\end{lemma}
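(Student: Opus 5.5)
The plan is to use the preceding lemma, which identifies $\Im(\tau_3)$ with $[D_1(H),[D_1(H),D_1(H)]]$, and to write each tree of types i), ii), iii) as an iterated bracket of tripods. We use the standard tree description of the bracket in $D(H)$: the bracket of two trees is the signed sum, over all pairs consisting of a leaf of the first tree and a leaf of the second, of the tree obtained by gluing the two along those leaves, weighted by $\omega$ of the two labels. If the labels are chosen so that exactly one pair of leaves, one from each tree, has non-zero contraction, the bracket is a single tree, up to sign. Everything then reduces to choosing auxiliary labels of \emph{fresh colours}, meaning colours not occurring among the leaves. We may also rearrange the leaves of a tree using the action of $\mathfrak{N}_g$ together with AS, since $\Im(\tau_3)$ is $\Sp(H)$-stable; this lets us normalise the labels in each case.

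\textbf{Degree 2.} Let $p,q,r,s\in\mathcal{C}$ and let $n$ be a colour not occurring among $p,q,r,s$. Then the bracket of the tripod $(p,q,a_n)$ with the tripod $(b_n,r,s)$ contains the term glued along $a_n$ and $b_n$. If moreover no label of $p,q$ contracts with a label of $r,s$, this is the only term, and the bracket equals $\pm$ the H-tree with leaves $p,q \mid r,s$. Hence such H-trees lie in $[D_1(H),D_1(H)]$. Contractions inside $\{p,q\}$ or inside $\{r,s\}$, such as the pair $a_2,b_2$ in case iii), are harmless, because they never pair a leaf of one tripod with a leaf of the other.

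\textbf{Degree 3.} Write the tree of the statement with leaves $a,b$ at one end, $c$ in the middle and $d,e$ at the other end. Choose a fresh colour $m$ and form the bracket of the tripod $(a,b,a_m)$ with the H-tree $(b_m,c \mid d,e)$. Gluing along $a_m$ and $b_m$ gives back the original tree, up to sign. By hypothesis there are no contractions between distant leaves, so $a$ and $b$ do not contract with $c,d,e$. Also $a_m$ contracts only with $b_m$. Hence the bracket is this single tree. By the degree 2 step, the H-tree $(b_m,c\mid d,e)$ lies in $[D_1(H),D_1(H)]$, provided we choose a colour $n$ different from $m$ and from the colours of $c,d,e$. (In case iii) we take $c,d,e = c,a_2,b_2$.) In case ii) the contracting pair $a_1,b_1$ sits inside the outer tripod, and in case iii) the pair $a_2,b_2$ sits inside an inner tripod. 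So no extra terms arise, and the tree lies in $[D_1(H),[D_1(H),D_1(H)]]=\Im(\tau_3)$.

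\textbf{Main obstacle: counting colours.} In case i) the five labels may use five distinct colours, and $m$ must avoid all of them; this is possible precisely because $g\ge 6$. For $n$ we only need to avoid $m$ and the colours of $c,d,e$. The colours of $a$ and $b$ may be reused, because $n$ is internal to the H-tree and disappears after the inner bracket. It cannot create a new contraction with $a$ or $b$ at the outer stage, since the outer bracket only involves the leaves $b_m,c,d,e$. Cases ii) and iii) use fewer colours, so the same choices work. The remaining care is to track the signs coming from AS, which do not affect membership in $\Im(\tau_3)$.
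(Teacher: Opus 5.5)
Your proposal is correct and is essentially the paper's argument: you write the tree as an iterated bracket $[T_1,[T_2,T_3]]$ of tripods glued along fresh-colour labels, then invoke $\Im(\tau_3)=[D_1(H),[D_1(H),D_1(H)]]$. The only cosmetic difference is that the paper reuses a single fresh colour $f$ for both gluings, bracketing the tripod with leaves $f^*,c,f$ against the tripod $f^*,d,e$, which still gives exactly one term since $\omega(f^*,f^*)=0$; you instead use two colours $m\neq n$.
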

%%%%%%%%%%%%%%%%%%%%%%%%%%%%%%%%%%%%%%%%%%%%%%%%%%%%%%%%%%%%%%%%

%%%%%%%%%%%%%%%%%%%%%%%%%%%%%%%%%%%%%%%%%%%%%%%%%%%%%%%%%%%%%%%%
\begin{proof}
Let $f\in \mathcal{C}$ such that $\col(f) \notin \lbrace \col(a),\col(b),\col(c),\col(d),\col(e) \rbrace $, we have that
\[
\tree{a,b,c,d,e}=\Bigg[ \tree{a,b,f},\Bigg[ \tree{f^*,c,f},\tree{f^*,d,e}\Bigg]\Bigg].
\]
Hence, the trees of type $i)$ belong to $\Im(\tau_3)$. The same kind of computation shows that the trees of types $ii)$ and $iii)$ also belong to $\Im(\tau_3)$.
\end{proof}
%%%%%%%%%%%%%%%%%%%%%%%%%%%%%%%%%%%%%%%%%%%%%%%%%%%%%%%%%%%%%%%%

%%%%%%%%%%%%%%%%%%%%%%%%%%%%%%%%%%%%%%%%%%%%%%%%%%%%%%%%%%%%%%%%
\begin{lemma}
\label{lema:trees-in-image}
%%%%%%%%%%%%%%%%%%%%%%%%%%%%%%%%%%%%%%%%%%%%%%%%%%%%%%%%%%%%%%%%
The following trees belong to $\Im(\tau_3)$
\[
i)\; \tree{x,a_1,b_1,y,z}, \qquad ii)\; \tree{x,a_1,b_1,a_2,b_2}, \qquad iii)\; \tree{x,a_1,b_1,a_1,y}, \] where the only contractions are the explicit ones.
\end{lemma}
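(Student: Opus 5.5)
The plan is the same as in the proof of Lemma~\ref{lema:trees-without-contract}. We realise each tree in the statement as an iterated bracket $[t_1,[t_2,t_3]]$ of degree-one tripods. These brackets lie in $[D_1(H),[D_1(H),D_1(H)]]=\Im(\tau_3)$ by the preceding lemma. The tripods carry auxiliary labels $f,f^*,g,g^*$, whose colours avoid every colour appearing in the tree; there are enough of them because $g\geq 6$.

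The new feature is that the contraction $a_1$--$b_1$ now occurs between \emph{distant} leaves. So when we expand the bracket (sum over contracted pairs of leaves, one from each factor), we expect one or two correction terms besides the desired tree. Those corrections should be either trees of the types in Lemma~\ref{lema:trees-without-contract}, which are already in $\Im(\tau_3)$, or trees reducible to the target tree by AS/IHX. We also use the $\mathfrak{N}_g$-action, which preserves $\Im(\tau_3)$ by $\Sp$-equivariance. This lets us relabel colours and swap $a_i\leftrightarrow b_i$ up to sign, so it suffices to treat the displayed normal forms.

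For case $i)$, I would first try
\[
\Bigg[\tree{x,a_1,f},\Bigg[\tree{f^*,b_1,g},\tree{g^*,y,z}\Bigg]\Bigg].
\]
The inner bracket gives only the contraction $g$--$g^*$, hence a single four-leaf tree with leaves $f^*,b_1,y,z$. The outer bracket then contracts $f$ with $f^*$, which produces $\tree{x,a_1,b_1,y,z}$. It also contracts $a_1$ with $b_1$, which produces a tree with leaves $x,f,f^*,y,z$. That second tree has only the contraction $f$--$f^*$, between \emph{adjacent-type} positions (one of the configurations $ii)$ of Lemma~\ref{lema:trees-without-contract}, possibly after an AS/IHX rearrangement). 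It is therefore already in $\Im(\tau_3)$, and $i)$ follows.

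Cases $ii)$ and $iii)$ are handled the same way. We replace $\tree{g^*,y,z}$ by $\tree{g^*,a_2,b_2}$ for $ii)$, and by $\tree{g^*,a_1,y}$ for $iii)$. Now more pairs become contractible: for $iii)$ the extra $a_1$ can contract with $b_1$ in both the inner and outer brackets. Each additional term will have one contraction fewer between distant leaves, or will be of type $i)$ just proven. So we order the cases $i)$, $ii)$, $iii)$ and use the earlier ones for the correction terms.

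The main obstacle is the bookkeeping of the extra terms, signs and IHX rearrangements in case $iii)$, where the doubled label $a_1$ generates several corrections. If the naive bracket produces a correction not obviously covered, I would try a second bracket with the roles of $a_1$ placements exchanged, e.g.\ $[\tree{f^*,b_1,g},[\tree{x,a_1,f},\tree{g^*,a_1,y}]]$, and take a linear combination to cancel the offending term. I would also check that all trees occurring have vanishing Morita trace; this is a consistency check, since a nonzero trace would mean the combination cannot lie in $\Im(\tau_3)$.
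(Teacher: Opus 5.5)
\textbf{There is a genuine gap: your argument breaks at the identification of the correction term in case $i)$.}

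In your bracket, the $a_1$--$b_1$ gluing produces $\tree{x,f,f^*,y,z}$. Here $f$ sits on an end vertex and $f^*$ on the middle vertex. That is a contraction between distant leaves, which is exactly the configuration you are trying to prove. It is not type $ii)$ of Lemma~\ref{lema:trees-without-contract}. IHX on that edge only turns it into $\tree{x,f^*,f,y,z}$ plus an adjacent-type tree.

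Tracking orientations with $f=a_m$, the bracket equals $\pm(\tree{x,a_1,b_1,y,z}+\tree{x,a_m,b_m,y,z})$. The same parasitic term appears in every way of producing the target as a bracket of three tripods, because the $a_1$-leaf and the $b_1$-leaf lie on different tripods. The same happens in cases $ii)$ and $iii)$.

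Now apply the transvection $a_1\mapsto a_1+a_m$, $b_m\mapsto b_m-b_1$ to the contraction-free tree $\tree{x,a_1,b_m,y,z}$. This already gives $\tree{x,a_1,b_1,y,z}-\tree{x,a_m,b_m,y,z}\in\Im(\tau_3)$. So linear combinations of your relations yield only $2\,\tree{x,a_1,b_1,y,z}\in\Im(\tau_3)$. This is a genuine factor-$2$ obstruction, not bookkeeping. Your Morita-trace check cannot detect it, since every tree involved has zero trace.

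\textbf{The missing idea is a half-integral element in degree $2$.}

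The paper uses $\Im(\tau_3)=\tau_3([\mathcal{J}_2,\mathcal{I}])$, so that $[\Im(\tau_2),D_1(H)]\subseteq\Im(\tau_3)$. It combines this with the fact from \cite{Fa23} that $\tree{x,a_1,b_1,a_k}+\frac12\tree{x,a_k,x,a_k}\in\Im(\tau_2)$ for a colour $k$ not appearing in the tree.

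Now bracket with $\tree{b_k,y,z}$. The first summand gives the target, since only $a_k$--$b_k$ contracts. The half-tree has two equal gluings, so it contributes the contraction-free tree $\tree{x,a_k,x,y,z}$ with coefficient exactly $1$:
\[
\tree{x,a_1,b_1,y,z}=\Bigg[\tree{x,a_1,b_1,a_k}+\frac{1}{2}\tree{x,a_k,x,a_k},\tree{b_k,y,z}\Bigg]-\tree{x,a_k,x,y,z}.
\]

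Case $ii)$ is identical, with $\tree{b_k,a_2,b_2}$ in place of $\tree{b_k,y,z}$.

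Case $iii)$ needs no bracket at all. Apply the symplectic map $a_2\mapsto a_2+a_1$, $b_1\mapsto b_1-b_2$ to $\tree{x,a_1,b_1,a_2,y}$, which is of type $i)$. Besides $\tree{x,a_1,b_1,a_1,y}$, the expansion consists of trees of type $i)$ (up to reversing the tree) and one contraction-free tree.
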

%%%%%%%%%%%%%%%%%%%%%%%%%%%%%%%%%%%%%%%%%%%%%%%%%%%%%%%%%%%%%%%%

%%%%%%%%%%%%%%%%%%%%%%%%%%%%%%%%%%%%%%%%%%%%%%%%%%%%%%%%%%%%%%%%
\begin{proof}
For trees of type $i)$ and $ii)$, we have that, for $k \notin \{ 1, 2, \col(x), \col(y), \col(z)\}$,
\[ \;\tree{x,a_1,b_1,y,z}=\Bigg[\tree{x,a_1,b_1,a_k}+\frac{1}{2}\tree{x,a_k,x,a_k},\tree{b_k,y,z}\Bigg]
-\tree{x,a_k,x,y,z},
\]

\[ \;\tree{x,a_1,b_1,a_2,b_2}=\Bigg[\tree{x,a_1,b_1,a_k}+\frac{1}{2}\tree{x,a_k,x,a_k},\tree{b_k,a_2,b_2}\Bigg]
-\tree{x,a_k,x,a_2,b_2},
\]
where the trees of degree 2 in the brackets belong to $\Im(\tau_2)$ by \cite{Fa23} and the second tree in both equalities belong to $\Im(\tau_3)$ by Lemma \ref{lema:trees-without-contract}.

\noindent Finally, for trees of type $iii)$, consider the tree $\tree{x,a_1,b_1,a_2,y}$ which belongs to $\Im(\tau_3)$ by point $i)$. Consider $G\in \Sp(H)$ that sends $a_2$ to $a_2+a_1$ and $b_1$ to $b_1-b_2$.
Then the following element also belongs to $\Im(\tau_3)$.
\[
G\Bigg(\tree{x,a_1,b_1,a_2,y}\Bigg)=
\tree{x,a_1,b_1,a_2,y}
+\tree{x,a_1,b_1,a_1,y}
-\tree{x,a_1,b_2,a_2,y}
-\tree{x,a_1,b_2,a_1,y}.
\]
Hence, by point $i)$ we get that
\[
\tree{x,a_1,b_1,a_1,y}\in \Im(\tau_3).
\]

\end{proof}
%%%%%%%%%%%%%%%%%%%%%%%%%%%%%%%%%%%%%%%%%%%%%%%%%%%%%%%%%%%%%%%%

%%%%%%%%%%%%%%%%%%%%%%%%%%%%%%%%%%%%%%%%%%%%%%%%%%%%%%%%%%%%%%%%
\begin{lemma}
\label{lema:equivariance}
%%%%%%%%%%%%%%%%%%%%%%%%%%%%%%%%%%%%%%%%%%%%%%%%%%%%%%%%%%%%%%%%
For any $h \in \mathfrak{N}_g$ and $w \in S^3(H)$, $h \cdot s(w) - s(h \cdot w) \in \Im(\tau_3)$. Hence, $T-s(\Tr^M(T)) \in \Im(\tau_3)$ if and only if $h \cdot T-s(\Tr^M(h\cdot T)) \in \Im(\tau_3)$.
\end{lemma}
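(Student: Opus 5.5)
It suffices to treat $w$ an elementary monomial $xyz$ in the basis $\mathcal{C}$ and $h$ a generator of $\mathfrak{N}_g$, i.e.\ some $\omega_i$ or a permutation $\sigma\in\mathfrak{S}_g$ permuting colors (sending $a_k\mapsto a_{\sigma(k)}$, $b_k\mapsto b_{\sigma(k)}$). Indeed, $\Im(\tau_3)$ is $\Sp(H)$-stable by equivariance of $\tau_3$, and $s$ is additive. So once the statement holds for generators, it holds for products: $h_1h_2 s(w)-s(h_1h_2w) = h_1\bigl(h_2s(w)-s(h_2w)\bigr) + \bigl(h_1 s(h_2w)-s(h_1h_2w)\bigr)$. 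The second assertion then follows. Since $\Tr^M$ is $\Sp(H)$-equivariant, we can write
\[
h\cdot T - s(\Tr^M(h\cdot T)) = h\cdot\bigl(T-s(\Tr^M T)\bigr) + \bigl(h\cdot s(\Tr^M T) - s(h\cdot \Tr^M T)\bigr),
\]
and the last bracket lies in $\Im(\tau_3)$. We also use $h^{-1}\in\mathfrak{N}_g$ for the converse.

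Now fix $w=xyz$ and $h$ a generator. The element $h$ maps basis elements to $\pm$ basis elements, preserves the color partition up to relabeling, and preserves the contraction pattern: $h$ sends the pair $\{a_k,b_k\}$ to $\{\pm a_{k'},\pm b_{k'}\}$. Hence $h\cdot w=\pm x'y'z'$ with the same number of contractions, and $s(h\cdot w)=\pm$ a tree of the same shape as $s(w)$. The tree $h\cdot s(w)$ is of the form $\tree{\pm a'_f,x',y',z',\pm b'_f}$, where the outer pair $a_f,b_f$ has been sent to a contracting pair of some color $f'=h(f)$, possibly swapped and with signs. Because $\omega(h a_f,h b_f)=1$, the product of signs times the orientation of the outer pair matches the normalization $(a,\cdot,\cdot,\cdot,b)$ used in $s$, using AS to swap the outer leaves if needed. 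The middle letters may also appear in a different order than the one prescribed by $s$ (with the convention $x\le y\le z$, or $a_k,b_k,l$, etc.).

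So the difference $h\cdot s(w)-s(h\cdot w)$ is a combination of two kinds of terms. The first kind changes the outer color from $h(f)$ to $f(\{\text{new colors}\})$. The second kind reorders the three middle leaves. For the first, a tree $\tree{a_f,x,y,z,b_f}-\tree{a_{f'},x,y,z,b_{f'}}$ with $f,f'$ both avoiding the middle colors has zero trace. I would show it lies in $\Im(\tau_3)$ by applying the element of $\Sp(H)$ (in fact of $\mathfrak{S}_g$) swapping colors $f,f'$ combined with Lemma \ref{lema:trees-in-image}. Alternatively, one can take the transvection-type $G$ as in the proof of Lemma \ref{lema:trees-in-image}iii), sending $a_f\mapsto a_f+a_{f'}$, $b_{f'}\mapsto b_{f'}-b_f$, applied to a tree of type i) or ii) of Lemma \ref{lema:trees-in-image} (e.g.\ $\tree{a_{f'},a_f,\ldots}$-type trees with contractions only in explicit places). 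This expresses the difference of outer colors modulo $\Im(\tau_3)$; this is where $g\ge 6$ is used to find spare colors. For the second kind, reordering the middle leaves $x,y,z$ of $\tree{u,x,y,z,v}$ is done via IHX/AS: swapping two middle leaves produces trees where some middle leaf has moved to an outer position. With $u,v=a_f,b_f$ of a color not among the middle ones, those trees have no contraction between distant leaves, or have only contractions of the forms in Lemmas \ref{lema:trees-without-contract} and \ref{lema:trees-in-image}, hence lie in $\Im(\tau_3)$.

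The main obstacle is the bookkeeping in the second step, namely checking, case by case according to the number of contractions (0, 1 or 2) in $w$, that all trees arising from the IHX moves and from the change of outer color are indeed covered by Lemmas \ref{lema:trees-without-contract} and \ref{lema:trees-in-image}. The delicate case is two contractions, $a_ka_kb_k$, where moved leaves may create a contraction between distant leaves not of the listed types. There I would first try to use $\omega_k$ and the symmetric form of iii) in Lemma \ref{lema:trees-in-image} to absorb them.
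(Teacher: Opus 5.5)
Your overall architecture matches the paper's. You reduce to generators of $\mathfrak{N}_g$ via the cocycle identity, then split $h\cdot s(w)-s(h\cdot w)$ into changes of outer colour and reorderings of the middle leaves. But the step ``using AS to swap the outer leaves if needed'' is false, and it hides the one case your tools cannot handle. AS only changes the cyclic orientations of a fixed labelled tree. In $\tree{b_f,x,y,z,a_f}$ the leaf $a_f$ shares a vertex with $z$, so its AS-normal form is $\tree{b_f,x,y,z,a_f}=-\tree{a_f,z,y,x,b_f}$: the middle gets reversed. This in general differs from $-\tree{a_f,x,y,z,b_f}$.

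Hence for $h=\omega_f$, with $f$ the outer colour of $s(w)$, you must show $s(w)-\tree{a_f,z,y,x,b_f}\in\Im(\tau_3)$. For $w=a_kb_kl$ this reads $\tree{a_f,a_k,b_k,l,b_f}+\tree{b_f,a_k,b_k,l,a_f}\in\Im(\tau_3)$, which is the paper's type v). IHX reordering cannot give this. To pass from $(l,b_k,a_k)$ to $(a_k,b_k,l)$ by adjacent swaps, $l$ must cross both $a_k$ and $b_k$. Each such move produces a by-product like $\tree{a_f,a_k,b_f,b_k,l}$ or $\tree{l,b_k,a_f,a_k,b_f}$, with two distant contractions of different colours and Morita trace $\pm a_fb_fl\neq 0$. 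Since $\Tr^M$ vanishes on $\Im(\tau_3)$, these by-products are not in $\Im(\tau_3)$. At best a combination of them is, and that is a two-contraction statement of the kind proved in Lemma \ref{lema:2-contract}, whose proof relies on the present lemma. With no contraction the IHX reversal is harmless: the by-products carry only the $a_f$--$b_f$ contraction and lie in the orbit of Lemma \ref{lema:trees-in-image}\,i).

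The missing step is cheap once seen. Pick a second admissible colour $f'\neq f$. Then $\omega_f$ fixes both $w$ and $s'(w):=\tree{a_{f'},a_k,b_k,l,b_{f'}}$, so
\[
\omega_f\cdot s(w)-s(w)=\omega_f\cdot\bigl(s(w)-s'(w)\bigr)+\bigl(s'(w)-s(w)\bigr)\in\Im(\tau_3),
\]
by the outer-colour change together with $\Sp(H)$-stability of $\Im(\tau_3)$. The paper instead applies $a_{f'}\mapsto a_{f'}+b_{f'}$ to $\tree{a_{f'},a_k,b_k,l,a_{f'}}$.

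Two smaller corrections. First, swapping the colours $f,f'$ by an element of $\mathfrak{S}_g$ proves nothing, since $\tree{a_f,x,y,z,b_f}-\tree{a_{f'},x,y,z,b_{f'}}\in\Im(\tau_3)$ is exactly what is to be shown. The transvection must be applied to the mismatched tree $\tree{a_f,x,y,z,b_{f'}}$ with the same middle. That tree lies in $\Im(\tau_3)$ by Lemma \ref{lema:trees-without-contract}, or by Lemma \ref{lema:trees-in-image}\,i) or iii) when the middle has one or two contractions. Second, the two-contraction case is not the delicate one. The middle $a_k,b_k,a_k$ is a palindrome, so $\omega_f$ fixes $s(w)$, and $\omega_k\cdot s(a_ka_kb_k)=s(\omega_k\cdot a_ka_kb_k)$ on the nose. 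Only outer-colour changes are needed there.
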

%%%%%%%%%%%%%%%%%%%%%%%%%%%%%%%%%%%%%%%%%%%%%%%%%%%%%%%%%%%%%%%%

%%%%%%%%%%%%%%%%%%%%%%%%%%%%%%%%%%%%%%%%%%%%%%%%%%%%%%%%%%%%%%%%
\begin{proof}
The action of $\mathfrak{N}_g$ does not affect the contraction patterns since $W_g$ merely exchanges $a_i$'s and $b_i$'s (up to sign), while $\mathfrak{S}_g$ permutes colors. Hence we proceed case by case in terms of the number of contractions of $w$. When there is no contraction, and $x \leq y \leq z$ are the labels of $h\cdot w$, it is enough to prove that for any $f,f'$ such that $\lbrace a_f,b_f,a_{f'},b_{f'}\rbrace$ have no contraction with $\lbrace x,y,z \rbrace$, the elements
\[ i)\;\tree{a_f,x,y,z,b_f}- \tree{a_{f'},x,y,z,b_{f'}} \qquad  ii)\;\tree{a_f,x,y,z,b_f}- \tree{a_{f'},y,x,z,b_{f'}} \qquad iii)\; \tree{a_f,x,y,z,b_f}- \tree{a_{f'},x,z,y,b_{f'}}
\]
belong to $\Im(\tau_3)$.

\textbf{Type i)} Consider the tree $\tree{a_f,x,y,z,b_{f'}}$ which belongs to $\Im(\tau_3)$ by Lemma \ref{lema:trees-without-contract}. Take the action by $G\in \Sp(H)$ that sends $a_f$ to $a_f-a_{f'}$ and $b_{f'}$ to $b_{f'}+b_f$. Then we have that the following element also belongs to $\Im(\tau_3)$.
\[
G\Bigg(\tree{a_f,x,y,z,b_{f'}}\Bigg)= \tree{a_f,x,y,z,b_{f'}}+\tree{a_f,x,y,z,b_{f}}-
\tree{a_{f'},x,y,z,b_{f'}}-\tree{a_{f'},x,y,z,b_{f}},
\]
and hence, again by Lemma \ref{lema:trees-without-contract}, we have that
$\tree{a_f,x,y,z,b_f}-\tree{a_{f'},x,y,z,b_{f'}}$ belongs to $\Im(\tau_3)$.

\textbf{Type ii)} By the AS and IHX relations and point $i)$, modulo $\Im(\tau_3)$, we have that
\[
\tree{a_f,x,y,z,b_f}- \tree{a_{f'},y,x,z,b_{f'}} =
\tree{a_f,x,y,z,b_f}- \tree{a_{f'},x,y,z,b_{f'}}- \tree{x,y,a_{f'},z,b_{f'}} \equiv \tree{z,b_{f'},a_{f'},y,x}. 
\]
This element is in the $\mathfrak{N}_g$-orbit of a tree of type $i)$ in Lemma \ref{lema:trees-in-image} and hence in $\Im(\tau_3)$.

\textbf{Type iii)} By the AS and IHX relations and point $i)$, modulo $\Im(\tau_3)$, we have that
\[
\tree{a_f,x,y,z,b_f}- \tree{a_{f'},x,z,y,b_{f'}} =
\tree{a_f,x,y,z,b_f}- \tree{a_{f'},x,y,z,b_{f'}}- \tree{a_{f'},x,b_{f'},y,z} \equiv \tree{x,a_{f'},b_{f'},y,z}
\]
which is in the $\mathfrak{N}_g$-orbit of a tree of type $i)$ in Lemma \ref{lema:trees-in-image}  and hence in $\Im(\tau_3)$.

When there is a single contraction, $h\cdot w=a_kb_kl$ with $a_k<b_k<l$ or $l<a_k<b_k$. Then it is enough to prove that
\[
iv)\;\tree{a_f,a_k,b_k,l,b_f}- \tree{a_{f'},a_k,b_k,l,b_{f'}} \qquad v)\;\tree{a_{f},a_k,b_k,l,b_{f}}
+\tree{b_{f'},a_k,b_k,l,a_{f'}}
\qquad vi)\;\tree{a_f,a_k,b_k,l,b_f} - \tree{a_{f'},b_k,a_k,l,b_{f'}}
\]
belong to $\Im(\tau_3)$.

\textbf{Type iv)} Consider the tree $\tree{a_{f'},a_k,b_k,l,b_f}$, which is in $\Im(\tau_3)$  by point $i)$ in Lemma \ref{lema:trees-in-image}. Take the action by $G\in \Sp(H)$ that sends $a_{f'}$ to $a_{f'}+a_f$ and $b_f$ to $b_f-b_{f'}$. Then we have that
\[
G\Bigg(\tree{a_{f'},a_k,b_k,l,b_f}\Bigg)=\tree{a_{f'},a_k,b_k,l,b_f}-\tree{a_{f'},a_k,b_k,l,b_{f'}}
+\tree{a_f,a_k,b_k,l,b_f}-\tree{a_f,a_k,b_k,l,b_{f'}},
\]
and hence we get that
\[
\tree{a_f,a_k,b_k,l,b_f} -\tree{a_{f'},a_k,b_k,l,b_{f'}}
\]
belongs to $\Im(\tau_3)$.

\textbf{Type v)} Consider the tree $\tree{a_{f'},a_k,b_k,l,a_{f'}}$ which is in $\Im(\tau_3)$  by point $i)$ in Lemma \ref{lema:trees-in-image}.
Take the action by $G\in \Sp(H)$ that sends $a_{f'}$ to $a_{f'}+b_{f'}$. Then we have that the following element also belongs to $\Im(\tau_3)$.
\[
G\Bigg(\tree{a_{f'},a_k,b_k,l,a_{f'}}\Bigg)=
\tree{a_{f'},a_k,b_k,l,a_{f'}}
+\tree{a_{f'},a_k,b_k,l,b_{f'}}
+\tree{b_{f'},a_k,b_k,l,a_{f'}}
+\tree{b_{f'},a_k,b_k,l,b_{f'}}.
\]
Hence we get that 
\[
\tree{a_{f'},a_k,b_k,l,b_{f'}}
+\tree{b_{f'},a_k,b_k,l,a_{f'}} \in  \Im(\tau_3),
\]
and since we dealt with element of type $iv)$ we get by subtracting the right element that
\[
\tree{a_{f},a_k,b_k,l,b_{f}}
+\tree{b_{f'},a_k,b_k,l,a_{f'}} \in \Im(\tau_3).
\]

\textbf{Type vi)} By the IHX relation and point $iv)$, modulo $\Im(\tau_3)$, we have that
\[
\tree{a_f,a_k,b_k,l,b_f} - \tree{a_{f'},b_k,a_k,l,b_{f'}}=
\tree{a_f,a_k,b_k,l,b_f} - \tree{a_{f'},a_k,b_k,l,b_{f'}} -\tree{a_{k},b_k,a_{f'},l,b_{f'}}
\equiv \tree{l,b_{f'},a_{f'},b_k,a_k}
\]
which is in the $\mathfrak{N}_g$-orbit of $\tree{l,a_1,b_1,a_2,b_2}$ and hence in $\Im(\tau_3)$ by point $ii)$ in Lemma \ref{lema:trees-in-image}.

Finally, when there are two contractions, $h\cdot w=a_ka_kb_k$ or $a_kb_kb_k$. Then it is enough to prove that for any $f,f'$ different from $k$,  
\[
vii)\; \tree{a_f,a_k,b_k,a_k,b_f} - \tree{a_{f'},a_k,b_k,a_k,b_{f'}}
\qquad
viii)\; \tree{a_f,b_k,a_k,b_k,b_f} - \tree{a_{f'},b_k,a_k,b_k,b_{f'}} 
\]
belong to $\Im(\tau_3)$.

\textbf{Type vii)} Consider the tree $\tree{a_{f'},a_k,b_k,a_k,b_f}$, which belongs to $\Im(\tau_3)$ by point $iii)$ in Lemma \ref{lema:trees-in-image}. Take the action by $G\in \Sp(H)$ that sends $a_{f'}$ to $a_{f'}+a_f$ and $b_f$ to $b_f-b_{f'}$. Then we have that the following element belongs to $\Im(\tau_3)$.
\[
G\Bigg(\tree{a_{f'},a_k,b_k,a_k,b_f}\Bigg)=\tree{a_{f'},a_k,b_k,a_k,b_f}
-\tree{a_{f'},a_k,b_k,a_k,b_{f'}}
+\tree{a_{f},a_k,b_k,a_k,b_f}-\tree{a_{f},a_k,b_k,a_k,b_{f'}}.
\]
Hence we get that
\[
\tree{a_f,a_k,b_k,a_k,b_f} - \tree{a_{f'},a_k,b_k,a_k,b_{f'}}
\]
belongs to $\Im(\tau_3)$.

\textbf{Type viii)} The same argument of $vii)$ holds for this case exchanging $a_k$ and $b_k$.

\end{proof}
%%%%%%%%%%%%%%%%%%%%%%%%%%%%%%%%%%%%%%%%%%%%%%%%%%%%%%%%%%%%%%%%

%%%%%%%%%%%%%%%%%%%%%%%%%%%%%%%%%%%%%%%%%%%%%%%%%%%%%%%%%%%%%%%%
\begin{lemma}
\label{lema:0-contract}
%%%%%%%%%%%%%%%%%%%%%%%%%%%%%%%%%%%%%%%%%%%%%%%%%%%%%%%%%%%%%%%%
If $T$ has no contraction, $T-s(\Tr^M(T))$ belongs to $\Im(\tau_3).$
\end{lemma}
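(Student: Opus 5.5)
The plan is to observe that the trace of such a tree vanishes, so the statement reduces to showing that $T$ itself lies in $\Im(\tau_3)$. This will follow directly from Lemma \ref{lema:trees-without-contract}, with Lemma \ref{lema:trees-in-image} used only if contractions at adjacent leaves are allowed.

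First, write $T=\tree{a,b,c,d,e}$ using multilinearity, with all labels in the basis $\calC$. By the formula
\[
\Tr^M\left(\tree{a,b,c,d,e}\right)=\omega(a,e)bcd-\omega(a,d)bce+\omega(b,d)ace-\omega(b,e)acd,
\]
only the four distant pairs $(a,e)$, $(a,d)$, $(b,d)$, $(b,e)$ contribute. If $T$ has no contraction, all four coefficients vanish, so $\Tr^M(T)=0$. Since $s$ is a group homomorphism, $s(\Tr^M(T))=s(0)=0$, and it remains to show $T\in\Im(\tau_3)$.

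If ``no contraction'' means that no pair of leaves contracts, then $T$ is exactly a tree of type $i)$ in Lemma \ref{lema:trees-without-contract}, and we are done. One can also read the statement as ``no contraction contributing to the trace'', i.e. no contraction between distant leaves. To cover that reading, I would add a short case check on the possible contractions between non-distant leaves, using AS and the $\mathfrak{N}_g$-action to reduce to the lemmas:
\begin{itemize}
\item Contractions only inside the pairs $\{a,b\}$ and/or $\{d,e\}$ give, up to AS and relabelling by $\mathfrak{N}_g$, the trees of types $ii)$ and $iii)$ of Lemma \ref{lema:trees-without-contract}.
\item A contraction between the middle leaf $c$ and a leaf sharing a vertex with it gives, after AS flips, trees of type $i)$ or $ii)$ of Lemma \ref{lema:trees-in-image}.
\end{itemize}
The $\mathfrak{N}_g$-action is harmless here because $\Im(\tau_3)$ is $\Sp(H)$-stable.

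There is no real obstacle. The only care needed is to check that every possible pattern of adjacent contractions appears among the cases already listed, up to AS and $\mathfrak{N}_g$. Since each leaf has a single dual, there are only a handful of patterns, so this enumeration is routine.
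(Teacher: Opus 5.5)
Your core argument is exactly the paper's proof: with no contraction the trace formula gives $\Tr^M(T)=0$, so $T-s(\Tr^M(T))=T$, which lies in $\Im(\tau_3)$ as a tree of type $i)$ in Lemma \ref{lema:trees-without-contract}. The ``alternative reading'' digression is unnecessary, because the paper defines the number of contractions as the number of unordered pairs of leaves with non-trivial $\omega$-pairing. As written it is also inaccurate: the middle leaf $c$ shares its vertex with no other leaf, distant pairs such as $(a,c)$ do not enter the trace, and trace-free patterns such as $\tree{a_1,b_1,b_1,x,y}$ are not covered by the two lemmas you cite (the paper handles this one in Lemma \ref{lema:2-contract} via a bracket with $\frac{1}{2}\tree{a_1,b_1,b_1,a_1}$).
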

%%%%%%%%%%%%%%%%%%%%%%%%%%%%%%%%%%%%%%%%%%%%%%%%%%%%%%%%%%%%%%%%

%%%%%%%%%%%%%%%%%%%%%%%%%%%%%%%%%%%%%%%%%%%%%%%%%%%%%%%%%%%%%%%%
\begin{proof}
Let $T=\tree{a,b,c,d,e}$, then $T-s(\Tr^M (T))=T$ which is in $\Im(\tau_3)$ by Lemma \ref{lema:trees-without-contract}.

\end{proof}
%%%%%%%%%%%%%%%%%%%%%%%%%%%%%%%%%%%%%%%%%%%%%%%%%%%%%%%%%%%%%%%%

%%%%%%%%%%%%%%%%%%%%%%%%%%%%%%%%%%%%%%%%%%%%%%%%%%%%%%%%%%%%%%%%
\begin{lemma}
\label{lema:1-contract}
%%%%%%%%%%%%%%%%%%%%%%%%%%%%%%%%%%%%%%%%%%%%%%%%%%%%%%%%%%%%%%%%
If $T$ has a single contraction, $T-s(\Tr^M (T))$ belongs to $\Im(\tau_3).$
\end{lemma}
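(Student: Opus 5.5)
We reduce to a short list of normal forms using Lemma \ref{lema:equivariance}, and then sort by where the single contracting pair sits in $T=\tree{a,b,c,d,e}$. Since $T$ has exactly one contraction, by the action of $\mathfrak{N}_g$ we may assume the contracting pair is $\{a_1,b_1\}$ and all other labels have pairwise distinct colors different from $1$. By Lemma \ref{lema:equivariance}, it suffices to treat one representative per orbit. Up to AS symmetries of the tree (reversing it, and swapping the two leaves at an end vertex) there are four positions for the pair:
\begin{enumerate}
\item[(a)] both at one end vertex, as in $\tree{a_1,b_1,c,d,e}$;
\item[(b)] one at an end vertex, the other at the middle, as in $\tree{x,a_1,b_1,y,z}$;
\item[(c)] one at each end, adjacent positions, as in $\tree{x,a_1,y,b_1,z}$ (second and fourth labels);
\item[(d)] one at each end, with the configuration $\tree{a_1,x,y,z,b_1}$ or $\tree{a_1,x,y,b_1,z}$.
\end{enumerate}

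\textbf{Cases (a) and (b).} These contain no contraction between distant leaves in the sense needed for the trace. Using the formula for $\Tr^M$, each term pairs first/second labels with fourth/fifth. Hence $\Tr^M(T)=0$, and $T\in\Im(\tau_3)$ by Lemma \ref{lema:trees-without-contract} ii) and Lemma \ref{lema:trees-in-image} i) respectively.

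\textbf{Cases (c) and (d).} Here the contraction is between distant leaves, so $\Tr^M(T)=\pm$ the product of the three remaining labels. For $\tree{a_1,x,y,z,b_1}$ we get $\Tr^M(T)=xyz$, and we must show
\[
\tree{a_1,x,y,z,b_1}-s(xyz)\in\Im(\tau_3).
\]
Sort $x,y,z$. Type i) of the proof of Lemma \ref{lema:equivariance} lets us change the outer color $1$ to $f$. Types ii) and iii) allow reordering of $x,y$ and of $y,z$, hence any permutation by composing them. Signs are tracked by AS.

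The remaining configurations $\tree{a_1,x,y,b_1,z}$, $\tree{x,a_1,y,b_1,z}$, and so on are reduced to this one. Apply IHX at the edge separating the central vertex: one of the three resulting terms is the tree of form (d), and the others are of forms (a) or (b), which are already in $\Im(\tau_3)$. Alternatively, AS at an end vertex moves $b_1$ to the last slot directly. Then compare the traces: the IHX relation is respected by $\Tr^M$, so the sign bookkeeping is automatic once the (d)-form is settled.

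\textbf{Main obstacle.} The hard part is that the labels $x,y,z$ need not be in "no contraction" position relative to the chosen $f$ and $f'$ in every reordering step. The permutation steps from Lemma \ref{lema:equivariance} also require $g\ge6$ so that fresh colors exist. I would verify this carefully, choosing auxiliary colors outside $\{1,\col(x),\col(y),\col(z)\}$.
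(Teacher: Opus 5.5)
Your proposal is correct and is essentially the paper's proof: the paper uses the same three position classes (both ends, end-plus-middle, same end vertex). It disposes of the last two by Lemma~\ref{lema:trees-in-image}~i) and Lemma~\ref{lema:trees-without-contract}~ii) since their trace vanishes, and it handles the first by point~i) of the proof of Lemma~\ref{lema:equivariance}; your explicit use of points~ii) and~iii) to reorder $x,y,z$ fills in what the paper leaves implicit. The slips are harmless: your (c) and (d) form a single AS-class, the remaining labels need not have distinct colors (a repeated $a_2$ is allowed), and the ``main obstacle'' is vacuous because color $1$ and $f$ automatically avoid $\{\col(x),\col(y),\col(z)\}$.
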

%%%%%%%%%%%%%%%%%%%%%%%%%%%%%%%%%%%%%%%%%%%%%%%%%%%%%%%%%%%%%%%%

%%%%%%%%%%%%%%%%%%%%%%%%%%%%%%%%%%%%%%%%%%%%%%%%%%%%%%%%%%%%%%%%
\begin{proof}
The trees $T$ with a single contraction are in the $\mathfrak{N}_g$-orbits of the trees:
\[
i)\; \tree{a_1,x,y,z,b_1}, \qquad ii)\; \tree{x,a_1,b_1,y,z}, \qquad iii)\; \tree{a_1,b_1,x,y,z}.
\]
with $x,y,z\in \mathcal{C}$ with no contractions such that $x\leq y \leq z$.

\textbf{Type i)} For trees of type $i)$ we have that
\[
T-s(\Tr^M (T))=\tree{a_1,x,y,z,b_1}-\tree{a_f,x,y,z,b_f},
\]
which belongs to $\Im(\tau_3)$ as shown in point $i)$ of the proof of Lemma \ref{lema:equivariance}.

\textbf{Type ii)} For the trees of type $ii)$ we have that
\[
T-s(\Tr^M (T))=T=\tree{x,a_1,b_1,y,z},
\] 
which belongs to $\Im(\tau_3)$ by point $i)$ in Lemma \ref{lema:trees-in-image}.

\textbf{Type iii)} For the trees of type $iii)$ we have that
\[
T-s(\Tr^M (T))=T=\tree{a_1,b_1,x,y,z},
\] 
which belongs to $\Im(\tau_3)$ by point $ii)$ in Lemma \ref{lema:trees-without-contract}.

\end{proof}
%%%%%%%%%%%%%%%%%%%%%%%%%%%%%%%%%%%%%%%%%%%%%%%%%%%%%%%%%%%%%%%%

%%%%%%%%%%%%%%%%%%%%%%%%%%%%%%%%%%%%%%%%%%%%%%%%%%%%%%%%%%%%%%%%
\begin{lemma}
\label{lema:2-contract}
%%%%%%%%%%%%%%%%%%%%%%%%%%%%%%%%%%%%%%%%%%%%%%%%%%%%%%%%%%%%%%%%
If $T$ has only 2 contractions, $T-s(\Tr^M(T))$ belongs to $\Im(\tau_3).$
\end{lemma}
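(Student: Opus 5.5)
We follow the same scheme as in Lemmas \ref{lema:0-contract} and \ref{lema:1-contract}. First we list, up to the action of $\mathfrak{N}_g$ and AS/IHX, the elementary trees $\tree{a,b,c,d,e}$ with exactly two contractions; by Lemma \ref{lema:equivariance} it suffices to treat one representative per orbit. Two contractions among five leaves involve either two distinct colors (say $a_1b_1$ and $a_2b_2$, plus a free letter $x$) or a single color appearing three times (say $a_1,b_1,a_1$ plus a free letter $x$). For each type we place the contracted pairs on the tree in all possible positions (adjacent leaves at the same trivalent vertex, or distant leaves), and use AS to reduce the number of configurations.

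\textbf{Two distinct colors.} If neither contraction is between distant leaves, the tree is of type $iii)$ in Lemma \ref{lema:trees-without-contract}. If exactly one contraction is distant and the other pair sits at one vertex, the tree is (up to $\mathfrak{N}_g$ and AS) $\tree{x,a_1,b_1,a_2,b_2}$ or $\tree{a_1,a_2,b_2,x,b_1}$. The first is in $\Im(\tau_3)$ by point $ii)$ of Lemma \ref{lema:trees-in-image}, and its trace vanishes. For the second, $\Tr^M$ gives $a_2b_2x$ (a single contraction pattern), and $T-s(\Tr^M T)$ is handled as in type $iv)$ of Lemma \ref{lema:equivariance}, by the $G$-trick replacing the outer color $1$ by $f$. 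If both contractions are distant, e.g. $\tree{a_1,a_2,x,b_2,b_1}$ or crossed versions like $\tree{a_1,a_2,x,b_1,b_2}$, we use IHX at the middle edge to rewrite $T$ as a combination of trees already treated plus $s(\Tr^M T)$ terms. Here $\Tr^M T$ is a sum of monomials of the form $a_ib_ix$, each handled by the single-contraction case via Lemma \ref{lema:equivariance} $iv)$–$vi)$.

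\textbf{One color thrice.} The trees are arrangements of $a_1,b_1,a_1,x$ together with a fifth label, which must be $a_1$ or $b_1$ or another letter. With exactly two contractions the labels are $a_1,a_1,b_1,x,y$, where $x,y$ have no contractions. The relevant shapes are $\tree{x,a_1,b_1,a_1,y}$ (point $iii)$ of Lemma \ref{lema:trees-in-image}), the shape with the repeated letter at the outer positions, and shapes like $\tree{a_1,a_1,b_1,x,y}$. The last has trace $a_1b_1$-type terms times $xy$ only if $a_1$ is distant from $b_1$, which again reduces to the earlier cases. The genuinely new case is when the trace is $\pm a_1a_1b_1$, i.e. $T=\tree{a_1,a_1,b_1,a_1,b_1}$-like trees, which actually have more contractions. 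We must check the exact meaning of ``only 2 contractions'': if it means the trace lands on $a_ka_kb_k$, then the representatives are $\tree{a_f,a_k,b_k,a_k,b_f}$-type trees with $f$ arbitrary, and we use vii)–viii) of Lemma \ref{lema:equivariance} to move $f$ to $f(\{k\})$.

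The main obstacle is the doubly-distant configurations. There IHX produces several terms whose traces must cancel against each other exactly, and each leftover must be matched to an already-established case with the correct sign. We first try to express such a tree as $G(T')-(\text{known trees})$ for a suitable transvection $G$ acting on a tree $T'$ from Lemma \ref{lema:trees-in-image}, mimicking the proof of $iii)$ there. If that fails, we fall back on explicit brackets $[\tau_2\text{-tree},\tau_1\text{-tree}]$ using an auxiliary color $k$ distinct from all others, which is available since $g\geq 6$.
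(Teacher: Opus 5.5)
Your two-colour analysis is essentially sound. One misclassification: $\tree{a_1,a_2,b_2,x,b_1}$ has \emph{both} pairs distant, not one. This is harmless, because handling it through point $iv)$ of the proof of Lemma~\ref{lema:equivariance} is exactly right. Likewise the doubly-distant tree $\tree{a_1,a_2,x,b_2,b_1}$ becomes, after one IHX, a sum of two trees of that same kind, so it is not the obstacle you expect.

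The genuine gap is the single-colour case, with labels $a_1,b_1,b_1,x,y$ and $x,y$ free. The paper defines the number of contractions unambiguously: the number of unordered pairs of leaves whose labels pair non-trivially under $\omega$. For these trees the trace can never be $\pm a_ka_kb_k$, because deleting a contracting pair from $\{a_1,b_1,b_1,x,y\}$ always leaves $b_1xy$. The trees $\tree{a_f,a_k,b_k,a_k,b_f}$ you fall back on have three contractions and belong to the next lemma, and $\tree{a_1,a_1,b_1,x,y}$ is zero by AS. Up to $\mathfrak{N}_g$, AS and reflection, the actual configurations are $\tree{a_1,b_1,b_1,x,y}$, $\tree{a_1,b_1,x,b_1,y}$, $\tree{x,a_1,b_1,b_1,y}$ and $\tree{x,b_1,a_1,b_1,y}$. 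You only dispose of the last one, via point $iii)$ of Lemma~\ref{lema:trees-in-image}. The first has trace $0$, but no earlier lemma covers it. The middle two have traces $\pm b_1xy$, so $T-s(\Tr^M(T))=T\mp\tree{a_f,u,v,w,b_f}$ with $\{u,v,w\}=\{b_1,x,y\}$, and neither summand lies in $\Im(\tau_3)$ on its own.

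What is missing is a concrete relation that trades the repeated letter for a fresh colour, lowering the number of contractions modulo $\Im(\tau_3)$. The paper obtains three such relations.
\begin{itemize}
\item Applying the transvection $b_2\mapsto b_2+b_1$, $a_1\mapsto a_1-a_2$ to $\tree{x,a_1,b_1,b_2,y}\in\Im(\tau_3)$ gives $\tree{x,a_1,b_1,b_1,y}\equiv\tree{x,a_2,b_1,b_2,y}-\tree{x,a_1,b_1,b_2,y}$.
\item The bracket $[\tree{a_1,b_1,a_2},\tree{b_2,x,b_1,y}]$ turns $\tree{a_1,b_1,x,b_1,y}$ into a one-contraction tree modulo $\Im(\tau_3)$.
\item The tree $\tree{a_1,b_1,b_1,x,y}$ is the bracket of the half-tree $\frac12\tree{a_1,b_1,b_1,a_1}\in\Im(\tau_2)$ with $\tree{b_1,x,y}$.
\end{itemize}
Since $\Tr^M$ vanishes on $\Im(\tau_3)$, the traces agree on both sides of each congruence, and Lemma~\ref{lema:1-contract} finishes by linearity.

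Your closing plan (``try a transvection, else a bracket with an auxiliary colour'') points in this direction. However, it names neither the starting tree nor the target, and that reduction is precisely the content of the lemma in the case you left open.
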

%%%%%%%%%%%%%%%%%%%%%%%%%%%%%%%%%%%%%%%%%%%%%%%%%%%%%%%%%%%%%%%%

%%%%%%%%%%%%%%%%%%%%%%%%%%%%%%%%%%%%%%%%%%%%%%%%%%%%%%%%%%%%%%%%
\begin{proof}
The trees $T$ with only 2 contractions are in the $\mathfrak{N}_g$-orbits of the trees:
\[
i)\; \tree{a_1,b_1,b_1,x,y}, \qquad ii)\; \tree{a_1,b_1,x,b_1,y}, \qquad iii)\; \tree{x,a_1,b_1,b_1,y}, \qquad iv)\;\tree{x,b_1,a_1,b_1,y},
\]
\[
v)\; \tree{a_1,b_1,a_2,b_2,x}, \qquad vi)\; \tree{a_1,b_1,x,a_2,b_2}, \qquad vii)\; \tree{a_1,a_2,x,b_2,b_1} , \qquad viii)\; \tree{a_2,a_1,b_1,x,b_2}.
\]

For the first four types of trees $i)$-$iv)$ we show that, modulo $\Im(\tau_3)$, these trees can be written as a sum of trees with 1 contraction and hence by Lemma \ref{lema:1-contract} we will get that for these trees, $T-s(\Tr^M(T))$ belongs to $\Im(\tau_3)$.

\textbf{Type i)} For these trees we have that
\[
\tree{a_1,b_1,b_1,x,y}=\Bigg[ \frac{1}{2}\tree{a_1,b_1,b_1,a_1}, \tree{b_1,x,y}\Bigg] \in \Im(\tau_3).
\]

\textbf{Type ii)} For these trees we have that
\[
\tree{a_1,b_1,x,b_1,y}= \Bigg[  \tree{a_1,b_1,a_2}, \tree{b_2,x,b_1,y}\Bigg]-\tree{a_2,b_1,y,b_2,x}.
\]
Hence, modulo $\Im(\tau_3)$, we get a tree with just 1 contraction.

\textbf{Type iii)} Consider the tree $\tree{x,a_1,b_1,b_2,y}$ which belongs to $\Im(\tau_3)$ by Lemma \ref{lema:trees-in-image}. Take the action by
$G\in Sp(H)$ such that sends $b_2$ to $b_2+b_1$ and $a_1$ to $a_1-a_2$. Then the following element also belongs to $\Im(\tau_3)$.
\[
G\Bigg(\tree{x,a_1,b_1,b_2,y}\Bigg)=
\tree{x,a_1,b_1,b_2,y}
+\tree{x,a_1,b_1,b_1,y}
-\tree{x,a_2,b_1,b_2,y}
-\tree{x,a_2,b_1,b_1,y}.
\]
By Lemma \ref{lema:0-contract} the fourth tree belongs to $\Im(\tau_3)$ and hence, modulo $\Im(\tau_3)$, we get that
\[
\tree{x,a_1,b_1,b_1,y}\equiv
\tree{x,a_2,b_1,b_2,y}-\tree{x,a_1,b_1,b_2,y}.
\]

\textbf{Type iv)} For these trees we have that by the IHX relation,
\[
\tree{x,b_1,a_1,b_1,y} = \tree{x,a_1,b_1,b_1,y}+\tree{a_1,b_1,x,b_1,y}.
\]
and hence a sum of trees of types ii), iii) that we have already treated.

We now show that the trees of types v)-vii), modulo trees with a single contraction, are sums of trees of type viii).

\textbf{Type v)}  For these trees, by the AS and IHX relations,
\[
\tree{a_1,b_1,a_2,b_2,x}=\tree{a_1,a_2,b_1,b_2,x}+\tree{a_2,b_1,a_1,b_2,x}
=\tree{a_2,a_1,b_1,x,b_2}-\tree{a_2,b_1,a_1,x,b_2},
\]
which is a sum of trees of type $viii)$.

\textbf{Type vi)}  For these trees, by the AS and IHX relations,
\[
\tree{a_1,b_1,x,a_2,b_2}=\tree{a_1,b_1,a_2,x,b_2}+\tree{a_1,b_1,b_2,a_2,x}=
-\tree{a_1,b_1,a_2,b_2,x}+\tree{a_1,b_1,b_2,a_2,x},
\]
which is a sum of trees of type $v)$ and hence of type $viii)$.

\textbf{Type vii)}  For these trees, by the AS and IHX relations,
\[
\tree{a_1,a_2,x,b_2,b_1}=\tree{a_1,a_2,b_2,x,b_1}+\tree{a_1,a_2,b_1,b_2,x}=
\tree{a_1,a_2,b_2,x,b_1}+\tree{a_2,a_1,b_1,x,b_2},
\]
which is a sum of trees of type $viii)$.

Finally we show that for trees $T$ of type $viii)$, $T-s(\Tr^M(T))$ belongs to $\Im(\tau_3)$. As a consequence, by Lemma \ref{lema:1-contract} we get that for trees $T$ of types v)-viii), the element $T-s(Tr^M(T))$ also belongs to $\Im(\tau_3)$. 

\textbf{Type viii)}  For these trees we have that
\[
T-s(\Tr^M(T))= \tree{a_2,a_1,b_1,x,b_2}-\tree{a_f,a_1,b_1,x,b_f},
\]
which belongs to $\Im(\tau_3)$ as shown in point $iv)$ of the proof of Lemma \ref{lema:equivariance}.

\end{proof}
%%%%%%%%%%%%%%%%%%%%%%%%%%%%%%%%%%%%%%%%%%%%%%%%%%%%%%%%%%%%%%%%

%%%%%%%%%%%%%%%%%%%%%%%%%%%%%%%%%%%%%%%%%%%%%%%%%%%%%%%%%%%%%%%%
\begin{lemma}
If $T$ has only 3 contractions, $T-s(\Tr^M(T))$ belongs to $\Im(\tau_3).$
\end{lemma}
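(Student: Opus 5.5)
Following the pattern of Lemma \ref{lema:2-contract}, I would list the $\mathfrak{N}_g$-orbits of elementary trees $T$ with exactly three contractions. By Lemma \ref{lema:equivariance} it suffices to treat one representative per orbit. The five labels carry three contracting pairs, so up to $\mathfrak{N}_g$ there are two kinds of label multisets. The first is $\{a_1,b_1,a_1,b_1,x\}$, with $x$ of a colour different from $1$, where the four pairs $a_1b_1$ give four contractions, so this kind is excluded; I must count carefully. The multiset $\{a_1,a_1,b_1,x,y\}$ gives two contractions. The multiset $\{a_1,b_1,a_2,b_2,x\}$ with $x$ of colour $\neq 1,2$ also gives two, which is why it appeared in the previous lemma. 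So three contractions force either $\{a_1,b_1,a_2,b_2,a_3\}$-type patterns, impossible with five labels giving exactly three pairs unless some colour repeats, or the patterns $\{a_1,b_1,b_1,a_2,b_2\}$ (contractions $a_1b_1$ twice and $a_2b_2$ once) and $\{a_1,a_1,a_1,b_1,x\}$ (three contractions $a_1b_1$). Up to $W_g$ and $\mathfrak{S}_g$, the representatives are thus trees with labels $\{a_1,b_1,b_1,a_2,b_2\}$ or $\{a_1,a_1,a_1,b_1,x\}$, in all placements up to AS symmetry.

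\textbf{Step 1: reduce modulo $\Im(\tau_3)$ and trees with fewer contractions.} As in types i)--vii) of Lemma \ref{lema:2-contract}, I would use three tools. The first is bracket identities $[D_2$-element in $\Im(\tau_2), D_1]$ or $[D_1,[D_1,D_1]]$, for instance $[\tfrac12\,\tree{a_1,b_1,b_1,a_1},\tree{b_1,\cdot,\cdot}]$ when two $b_1$'s and an $a_1$ sit at one end. The second is the $\Sp(H)$-transvection trick: start from a tree in $\Im(\tau_3)$ with a fresh colour and apply $G$ with $b_k\mapsto b_k+b_1$, $a_1\mapsto a_1-a_k$. The third is IHX/AS. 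With these I would express each representative, modulo $\Im(\tau_3)$, as a combination of trees with at most two contractions plus a normal-form tree of the shape $s(w)$.

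\textbf{Step 2: compute traces.} Since $\Tr^M$ vanishes on $\Im(\tau_3)$ and $T-s(\Tr^M(T))$ is linear, reducing $T\equiv\sum T_i$ with each $T_i$ handled by Lemmas \ref{lema:0-contract}, \ref{lema:1-contract} and \ref{lema:2-contract} (or of normal form $s(w)$) gives $T-s(\Tr^M T)\equiv\sum (T_i - s(\Tr^M T_i))\equiv 0$, because $\Tr^M(T)=\sum\Tr^M(T_i)$. Hence it suffices to show that each representative lies in the span of $\Im(\tau_3)$, trees with fewer contractions, and trees $T'$ for which $T'-s(\Tr^M T')\in\Im(\tau_3)$ is already known, such as the type vii) differences of Lemma \ref{lema:equivariance}.

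\textbf{Main obstacle.} I expect the main obstacle to be the trees whose trace is $a_1a_1b_1$-like, namely $\{a_1,a_1,a_1,b_1,x\}$ placements and $\{a_1,b_1,b_1,a_2,b_2\}$ with the $a_2b_2$ pair distant. For the first, the middle-vertex placements (e.g. $\tree{x,a_1,b_1,a_1,a_1}$) I would attack with the transvection $b_1\mapsto b_1+b_k$, $a_k\mapsto a_k-a_1$ applied to $\tree{x,a_1,b_1,a_1,a_k}$, which lies in $\Im(\tau_3)$ by Lemma \ref{lema:trees-in-image} iii) up to orbit, extracting the three-contraction tree modulo two-contraction ones. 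For the second, I would rewrite $\tree{a_2,a_1,b_1,b_1,b_2}$ via IHX into trees where the $a_2,b_2$ pair sits at one vertex, which lie in $\Im(\tau_3)$ by Lemma \ref{lema:trees-without-contract} iii) and Lemma \ref{lema:trees-in-image}, plus the tree $\tree{a_2,b_1,a_1,b_1,b_2}$. That tree matches $s(a_1b_1b_1)$ up to changing $f$ to $2$, which is covered by type viii) of Lemma \ref{lema:equivariance}. The bookkeeping of signs and coefficients $\tfrac12$ (whether the relevant degree-2 elements are integrally in $\Im(\tau_2)$ via \cite{Fa23}) is where I expect errors to hide.
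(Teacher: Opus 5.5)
There is a gap: the two hard cases you spell out do not go through, and one orbit is never handled. Your overall scheme is the paper's: orbit representatives, transvections applied to trees already in $\Im(\tau_3)$, the bracket with $\frac12\tree{a_1,b_1,b_1,a_1}$, IHX, and linearity of $T\mapsto T-s(\Tr^M(T))$. The orbits of $\tree{a_1,b_1,b_1,a_2,b_2}$ (the bracket) and $\tree{a_2,b_1,a_1,b_1,b_2}$ (type viii) of Lemma \ref{lema:equivariance}) are correctly covered by the tools you name.

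\textbf{Labels $\{a_1,a_1,a_1,b_1,x\}$.} If $b_1$ or $x$ sits on the middle vertex, two copies of $a_1$ share an end vertex, so the tree vanishes by AS. In particular your example $\tree{x,a_1,b_1,a_1,a_1}$ is $0$. Your transvection applied to $\tree{x,a_1,b_1,a_1,a_k}$ produces only this zero tree and $\tree{x,a_1,b_k,a_1,a_1}=0$, so it merely yields $\tree{x,a_1,b_k,a_1,a_k}\in\Im(\tau_3)$ and says nothing about three contractions. The unique nonzero tree of this kind is, up to sign and $\mathfrak{N}_g$, $\tree{a_1,b_1,a_1,a_1,x}$, which is the paper's $\tree{a_1,b_1,b_1,b_1,x}$. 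To reach it you need a starting tree of that same shape. The tree $\tree{a_1,b_1,a_1,a_k,x}$ lies in $\Im(\tau_3)$, being in the orbit of type i) of Lemma \ref{lema:2-contract}. Applying $a_k\mapsto a_k+a_1$, $b_1\mapsto b_1-b_k$ to it gives $\tree{a_1,b_1,a_1,a_1,x}\equiv\tree{a_1,b_k,a_1,a_k,x}+\tree{a_1,b_k,a_1,a_1,x}$ modulo $\Im(\tau_3)$, a sum of trees with one and zero contractions.

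\textbf{Labels $\{a_1,b_1,b_1,a_2,b_2\}$.} The IHX rewriting you describe does not exist. IHX preserves labels, and up to sign the only nonzero tree with $a_2,b_2$ on a common vertex is $\tree{a_1,b_1,b_1,a_2,b_2}$. IHX on the left internal edge actually gives
\[
\tree{a_2,a_1,b_1,b_1,b_2}=\tree{a_2,b_1,a_1,b_1,b_2}+\tree{a_1,b_1,a_2,b_2,b_1}.
\]
The last tree is not a combination of $\tree{a_1,b_1,b_1,a_2,b_2}$ and $\tree{a_2,b_1,a_1,b_1,b_2}$. To see this, compare the $b_2\otimes T_4(H)$-components of $\eta$: the monomial $a_2b_1b_1a_1$ occurs only in the former, and $b_1b_1a_1a_2$ occurs in $\tree{a_1,b_1,a_2,b_2,b_1}$ but not in the latter.

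This tree is the paper's type ii), which your plan never treats. Neither Lemma \ref{lema:trees-without-contract} nor Lemma \ref{lema:trees-in-image} applies to it, since beyond the adjacent pair $a_1,b_1$ it has two further distant contractions. The missing step is another transvection. Apply $b_3\mapsto b_3+b_1$, $a_1\mapsto a_1-a_3$ to $\tree{a_1,b_1,a_2,b_2,b_3}\in\Im(\tau_3)$. This gives
\[
\tree{a_1,b_1,a_2,b_2,b_1}\equiv\tree{a_3,b_1,a_2,b_2,b_3}+\tree{a_3,b_1,a_2,b_2,b_1}
\]
modulo $\Im(\tau_3)$, with two and one contractions respectively. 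Lemmas \ref{lema:1-contract} and \ref{lema:2-contract}, your linearity argument, and the displayed IHX identity then settle both $\tree{a_1,b_1,a_2,b_2,b_1}$ and $\tree{a_2,a_1,b_1,b_1,b_2}$.
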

%%%%%%%%%%%%%%%%%%%%%%%%%%%%%%%%%%%%%%%%%%%%%%%%%%%%%%%%%%%%%%%%

%%%%%%%%%%%%%%%%%%%%%%%%%%%%%%%%%%%%%%%%%%%%%%%%%%%%%%%%%%%%%%%%
\begin{proof}
The trees $T$ with only 3 contractions are in the $\mathfrak{N}_g$-orbits of the trees:
\[
i)\; \tree{a_1,b_1,b_1,b_1,x}, \qquad ii)\; \tree{a_1,b_1,a_2,b_2,b_1}, \qquad iii)\; \tree{a_1,b_1,b_1,a_2,b_2}, \]
\[
iv)\;\tree{a_2,b_1,a_1,b_1,b_2},
\qquad v)\;\tree{a_2,a_1,b_1,b_1,b_2}.
\]

For the first three types of trees $i)$-$iii)$ we show that, modulo $\Im(\tau_3)$, these trees can be written as a sum of trees with 1 or 2 contractions and hence by Lemmas \ref{lema:1-contract} and \ref{lema:2-contract}  we will get that for these trees, $T-s(\Tr^M(T))$ belongs to $\Im(\tau_3)$.

\textbf{Type i)}
Consider the tree $\tree{a_1,b_1,b_1,b_2,x}$.
A direct computation shows that the Morita trace $\Tr^M$ vanishes on such tree, hence by Lemma \ref{lema:2-contract} such tree belongs to $\Im(\tau_3)$.
Take the action by
$G\in Sp(H)$ that sends $a_1$ to $a_1+a_2$ and $b_2$ to $b_2-b_1$.
Then we have that the following element also belongs to $\Im(\tau_3)$.
\[
G\Bigg(\tree{a_1,b_1,b_1,b_2,x}\Bigg)=
\tree{a_1,b_1,b_1,b_2,x}
-\tree{a_1,b_1,b_1,b_1,x}
+\tree{a_2,b_1,b_1,b_2,x}
-\tree{a_2,b_1,b_1,b_1,x}.
\]
Hence, by Lemmas \ref{lema:0-contract} and 
\ref{lema:2-contract}, we get that, modulo $\Im(\tau_3)$,
\[
\tree{a_1,b_1,b_1,b_1,x}
\equiv\tree{a_2,b_1,b_1,b_2,x}.
\]

\textbf{Type ii)}
Consider the tree
$\tree{a_1,b_1,a_2,b_2,b_3}$, which belongs to $\Im(\tau_3)$ by Lemma \ref{lema:trees-in-image}.
Take the action by $G\in Sp(H)$ that sends $b_3$ to $b_3+b_1$ and $a_1$ to $a_1-a_3$. Then we have that the following element also belongs to $\Im(\tau_3)$.
\[
G\Bigg(\tree{a_1,b_1,a_2,b_2,b_3}\Bigg)=
\tree{a_1,b_1,a_2,b_2,b_3}
+\tree{a_1,b_1,a_2,b_2,b_1}
-\tree{a_3,b_1,a_2,b_2,b_3}
-\tree{a_3,b_1,a_2,b_2,b_1}.
\]
Hence, by Lemmas \ref{lema:1-contract} and \ref{lema:2-contract}, we get that
modulo $\Im(\tau_3)$,
\[
\tree{a_1,b_1,a_2,b_2,b_1} \equiv \tree{a_3,b_1,a_2,b_2,b_3}.
\]

\textbf{Type iii)} For these trees we have that
\[
\tree{a_1,b_1,b_1,a_2,b_2}=
\Bigg[\frac{1}{2}\tree{a_1,b_1,b_1,a_1},\tree{b_1,a_2,b_2}\Bigg].
\]
Hence these trees belong to $\Im(\tau_3)$.

\textbf{Type iv)} For these trees we have that
\[
T-s(\Tr^M(T))= \tree{a_2,b_1,a_1,b_1,b_2}-\tree{a_f,b_1,a_1,b_1,b_f}.
\]
which belongs to $\Im(\tau_3)$ as shown in point $viii)$ of the proof of Lemma \ref{lema:equivariance}.

\textbf{Type v)} By the AS and IHX relations we have that
\[
\tree{a_2,a_1,b_1,b_1,b_2}
=\tree{a_2,b_1,a_1,b_1,b_2}+\tree{b_1,a_1,a_2,b_1,b_2}
=\tree{a_2,b_1,a_1,b_1,b_2}-\tree{b_1,a_1,a_2,b_2,b_1}.
\]
Therefore the trees of this type are the sum of trees of type ii) and iv).

\end{proof}
%%%%%%%%%%%%%%%%%%%%%%%%%%%%%%%%%%%%%%%%%%%%%%%%%%%%%%%%%%%%%%%%

%%%%%%%%%%%%%%%%%%%%%%%%%%%%%%%%%%%%%%%%%%%%%%%%%%%%%%%%%%%%%%%%
\begin{lemma}
If $T$ has only 4 contractions, $T-s(\Tr^M(T))$ belongs to $\Im(\tau_3).$
\end{lemma}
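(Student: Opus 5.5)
The plan is to continue the same inductive scheme as in the previous lemmas. First I classify the trees with exactly $4$ contractions up to the action of $\mathfrak{N}_g$. Then, for each representative, I use a well-chosen element $G\in\Sp(H)$ of transvection type (or a bracket of lower degree elements) to express it, modulo $\Im(\tau_3)$, as a combination of trees with at most $3$ contractions. The earlier lemmas then finish the job.

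\textbf{Classification.} Since $5$ leaves must produce $4$ contracting pairs, a short count shows that the labels must be, up to $\mathfrak{N}_g$, either $\{a_1,a_1,b_1,b_1,x\}$ with $\col(x)\neq 1$, or $\{a_1,b_1,b_1,b_1,b_1\}$. Other distributions give at most $3$ contractions; for example $a_1,b_1,b_1,b_1,a_2$ and $a_1,b_1,b_1,a_2,b_2$ give $3$. Using the AS relation and the symmetry of the tree $\tree{a,b,c,d,e}$ under reversal, I then list the finitely many placements of these labels on the five leaves. As in Lemma \ref{lema:2-contract}, I would first use IHX to reduce to a few basic placements.

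\textbf{Main reduction.} For each placement I mimic type i) of the previous lemma. I start from a tree $T'$ with $3$ contractions in which one of the repeated labels $b_1$ (resp.\ $a_1$) is replaced by $b_2$ (resp.\ $a_2$). By the previous lemmas, $T'-s(\Tr^M(T'))\in\Im(\tau_3)$. I then apply $G$ sending $b_2\mapsto b_2+b_1$ and $a_1\mapsto a_1-a_2$. Expanding $G(T')$ by multilinearity yields the desired $4$-contraction tree plus trees with fewer contractions.

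By Lemma \ref{lema:equivariance}-type equivariance, which here is for $\Sp(H)$ rather than just $\mathfrak{N}_g$, one needs that $G\cdot s(w)-s(G\cdot w)\in\Im(\tau_3)$ for the relevant $w$. To avoid this, I instead choose $T'$ with $\Tr^M(T')=0$, as was done in type i) of the $3$-contraction lemma, so that $T'\in\Im(\tau_3)$ outright. In that case $G(T')\in\Im(\tau_3)$, and the $4$-contraction tree equals, modulo $\Im(\tau_3)$, a combination of trees with at most $3$ contractions. Moreover $\Tr^M$ of both sides agree because $\Tr^M$ is $\Sp$-equivariant, so the previous lemmas apply. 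For placements where the repeated pair sits on one trivalent vertex, e.g.\ $\tree{a_1,b_1,b_1,a_1,x}$ or $\tree{a_1,b_1,b_1,b_1,b_1}$, I would instead try writing $T$ directly as $\Bigl[\tfrac12\tree{a_1,b_1,b_1,a_1},\tree{\cdot,\cdot,\cdot}\Bigr]$, as in type iii) of the previous lemma, which puts it in $\Im(\tau_3)$ immediately.

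\textbf{Main obstacle.} The hard step is finding, for each placement of $\{a_1,a_1,b_1,b_1,x\}$, an auxiliary tree $T'$ with vanishing Morita trace, together with a transvection $G$, such that the expansion of $G(T')$ isolates exactly one $4$-contraction tree. The risk is that the expansion produces two $4$-contraction trees, or the wrong one, and the auxiliary tree must have trace zero. I would first try $T'$ obtained by replacing one $a_1$ by $a_2$ and one $b_1$ by $b_2$ at distant positions, choosing these so that the $\omega$-contractions cancel in the trace formula. If that fails, I would combine two such identities, or use IHX to move to a placement already handled.
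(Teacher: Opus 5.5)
Your mechanism for the essential case is the paper's. You apply a transvection $G$ to a tree $T'$ with fewer contractions and $\Tr^M(T')=0$, so that $T'\in\Im(\tau_3)$ by the earlier lemmas. You check that the expansion of $G(T')$ contains a single $4$-contraction tree, and you send the other terms through Lemmas \ref{lema:0-contract} and \ref{lema:2-contract}, using $\Tr^M(G(T'))=G\cdot\Tr^M(T')=0$. However, the proposal goes wrong on the one case that carries content.

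Up to $\mathfrak{N}_g$, AS and reversal, every placement except one orbit is a zero derivation. The trees $\tree{a_1,b_1,b_1,b_1,b_1}$, $\tree{b_1,b_1,a_1,b_1,b_1}$, $\tree{a_1,a_1,x,b_1,b_1}$ and $\tree{a_1,a_1,b_1,b_1,x}$ have two equal labels at one vertex. The tree $\tree{a_1,b_1,x,b_1,a_1}$ dies by the symmetry Fact of the appendix. The surviving orbit is that of $T=\tree{a_1,b_1,a_1,b_1,x}$, and it contains the tree you want to dispose of by a bracket: $\omega_1\cdot\tree{a_1,b_1,b_1,a_1,x}=-\tree{a_1,b_1,a_1,b_1,x}$.

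That shortcut cannot work, because $\Tr^M\bigl(\tree{a_1,b_1,b_1,a_1,x}\bigr)=-a_1b_1x\neq 0$, whereas $\Tr^M$ vanishes on $\Im(\tau_3)$. Also, no two equal labels share a vertex in this tree. The bracket you have in mind would use the tripod $\tree{b_1,a_1,x}$, whose $a_1$ also glues to the two $b_1$'s of $\frac{1}{2}\tree{a_1,b_1,b_1,a_1}$ and produces a second $4$-contraction term.

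So the step you flag as the main obstacle is in fact the whole proof, and it is left undone. Your fallback of replacing an $a_1$ and a $b_1$ simultaneously is unnecessary; with a single transvection of your type it yields no $4$-contraction term at all.

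The paper takes $T'=\tree{a_f,b_1,a_1,b_1,x}$, which has two contractions and trace zero, and $G\colon a_f\mapsto a_f+a_1,\ b_1\mapsto b_1-b_f$. Among the eight terms only $T$ has four contractions, and its trace cancels that of $\tree{a_f,b_1,a_1,-b_f,x}$. Hence $T+\tree{a_f,b_1,a_1,x,b_f}\in\Im(\tau_3)$. Lemma \ref{lema:2-contract} then trades $\tree{a_f,b_1,a_1,x,b_f}$ for $\tree{a_f,a_1,b_1,x,b_f}=-s(\Tr^M(T))$, with $f=f(\{1,\col(x)\})$.

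Your own first idea also works if you replace the right letter. The tree $T'=\tree{a_1,b_1,a_1,b_2,x}$ has two contractions (not three) and $\Tr^M(T')=0$. The map $G\colon b_2\mapsto b_2+b_1,\ a_1\mapsto a_1-a_2$ sends it to $T$ plus trees with $0$ or $2$ contractions. Replacing the other $b_1$ would give $\tree{a_1,b_2,a_1,b_1,x}$, whose trace $-a_1b_2x$ is nonzero.

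With this choice made explicit, and $\tree{a_1,b_1,b_1,a_1,x}$ handled through Lemma \ref{lema:equivariance} instead of a bracket, your argument becomes a complete proof, essentially equivalent to the paper's.
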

%%%%%%%%%%%%%%%%%%%%%%%%%%%%%%%%%%%%%%%%%%%%%%%%%%%%%%%%%%%%%%%%

%%%%%%%%%%%%%%%%%%%%%%%%%%%%%%%%%%%%%%%%%%%%%%%%%%%%%%%%%%%%%%%%
\begin{proof}
The trees $T$ with 4 contractions are the $\mathfrak{N}_g$-orbits of the trees:
\[
i)\; \tree{a_1,b_1,b_1,b_1,b_1}, \qquad ii)\; \tree{b_1,b_1,a_1,b_1,b_1}, \qquad iii)\;\tree{a_1,a_1,x,b_1,b_1},
\qquad iv)\;\tree{a_1,b_1,x,b_1,a_1}, \]
\[
v)\;\tree{a_1,a_1,b_1,b_1,x},
\qquad vi)\;\tree{a_1,b_1,a_1,b_1,x},
\]
with $x\in \mathcal{C}$ such that $\col(x)\neq 1$.

From all these trees, the first five represent the trivial derivation. Hence the last tree is the only one remaining:
\[
T := \tree{a_1,b_1,a_1,b_1,x}.
\] By direct computation $T -s(\Tr^M(T)) = \tree{a_1,b_1,a_1,b_1,x} +\tree{a_f,a_1,b_1,x,b_f}$ with $f \neq 1,\col(x).$ We prove that this element belongs to $\Im(\tau_3)$.
Consider the tree $\tree{a_f,b_1,a_1,b_1,x}$ which belongs to $\Im(\tau_3)$ as shown in point $viii)$ of the proof of Lemma \ref{lema:equivariance}. Take the action by $G\in Sp(H)$ that sends $a_f$ to $a_f + a_1$ and $b_1$ to $b_1-b_f$. Then we get that the derivation \begin{align*}
   G\Bigg(\tree{a_f,b_1,a_1,b_1,x}\Bigg)= &\tree{a_f,b_1,a_1,b_1,x} + \tree{a_f,b_1,a_1,-b_f,x} + \tree{a_f,-b_f,a_1,b_1,x} +\tree{a_f,b_f,a_1,b_f,x} \\
    & + \tree{a_1,b_1,a_1,b_1,x} + \tree{a_1,b_1,a_1,-b_f,x} + \tree{a_1,-b_f,a_1,b_1,x} +\tree{a_1,b_f,a_1,b_f,x}
\end{align*} belongs to $\Im(\tau_3)$.
A direct computation shows that the Morita trace vanishes on the sum of these trees minus the second and the fifth tree, which is formed by trees with $0$ or $2$ contractions. Then, by Lemmas \ref{lema:0-contract} and
\ref{lema:2-contract}, such a sum belongs to $\Im(\tau_3)$ and hence we deduce that 
\[
\tree{a_1,b_1,a_1,b_1,x} + \tree{a_f,b_1,a_1,x,b_f}\in \Im(\tau_3).
\]
Finally, by Lemma \ref{lema:2-contract}, the element $\tree{a_f,a_1,b_1,x,b_f}-\tree{a_f,b_1,a_1,x,b_f}$ belongs to $\Im(\tau_3)$, and hence adding this element to the element above we conclude the proof.
\end{proof}
%%%%%%%%%%%%%%%%%%%%%%%%%%%%%%%%%%%%%%%%%%%%%%%%%%%%%%%%%%%%%%%%

%%%%%%%%%%%%%%%%%%%%%%%%%%%%%%%%%%%%%%%%%%%%%%%%%%%%%%%%%%%%%%%%
\begin{lemma}
Any tree with at least $5$ contractions yields a trivial derivation.
\end{lemma}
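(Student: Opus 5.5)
A tree $\tree{a,b,c,d,e}$ has five leaves, so at least $5$ contractions means that among the $\binom{5}{2}=10$ unordered pairs of labels at least five pair $a_k$ with $b_k$. I will first turn this into a statement about colours and then kill each possible configuration with the AS relation.

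\textbf{Step 1: colour counting.} Contractions only happen between leaves of the same colour. If colour $k$ occurs with $p$ copies of $a_k$ and $q$ copies of $b_k$, it contributes $pq$ contractions. Since $p+q\le 5$, a single colour gives $pq\le 6$, and $pq\geq 5$ forces $\{p,q\}=\{2,3\}$. If two or more colours occur, their sizes sum to at most $5$. The only way to exceed $4$ contractions is then
\[
(p_1,q_1)=(2,2),\qquad (p_2,q_2)\in\{(1,0),(0,1)\},
\]
which gives exactly $4$ contractions. Indeed, a colour of size $3$ contributes at most $2$ and one of size $2$ at most $1$. Hence every such tree, up to $\mathfrak{N}_g$, has label multiset $\{a_1,a_1,b_1,b_1,b_1\}$ (or its mirror under $\omega_1$). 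By multilinearity we may assume all labels are basis elements.

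\textbf{Step 2: using AS.} Every labelling of the $5$-leaf tree by the multiset $\{a_1,a_1,b_1,b_1,b_1\}$ has some trivalent vertex whose two leaf-neighbours carry equal labels. The two outer vertices each have a pair of leaves: $\{1,2\}$ on one side and $\{4,5\}$ on the other, with leaf $3$ in the middle. If the pair $\{1,2\}$ has equal labels, or the pair $\{4,5\}$ does, then AS swaps two identical leaves and gives $T=-T$. If both pairs are mixed, then each contains one $a_1$, so both $a_1$'s are used up there. The remaining labels are $b_1$ in each pair and $b_1$ in the middle, and we are forced to the labelling $\tree{a_1,b_1,b_1,b_1,a_1}$ up to AS.

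\textbf{Step 3: the forced case.} For this tree I will use the symmetry of the tree exchanging its two ends. This symmetry is a composite of AS moves, contributing a sign $(-1)^{\text{number of swaps}}$. I will compute that sign carefully with the orientation convention in Appendix~\ref{S:AppendixA}. If the sign is $-1$, then $T=-T$. If instead it comes out $+1$, I will apply IHX at the middle edge. This expresses $T$ through trees in which a pair at an outer vertex is $\{b_1,b_1\}$, and those vanish by Step 2, giving $T=0$ once more.

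\textbf{Torsion-freeness.} All of these arguments give $2T=0$ rather than $T=0$. To conclude I will use that $D_3(H)$ is torsion-free, as it is a subgroup of $H\otimes\mathcal{L}_4(H)$. The main obstacle is Step 3: getting the sign of the end-swapping symmetry right, or, failing that, the IHX fallback.
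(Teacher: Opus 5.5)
Your reduction matches the paper's. The colour count forces the label multiset $\{a_k,a_k,b_k,b_k,b_k\}$ or its mirror, and AS kills every labelling with an equal outer pair. What remains is the palindromic tree $T=\tree{a_1,b_1,b_1,b_1,a_1}$. Your torsion-freeness remark makes explicit what the paper leaves implicit in ``trivial derivation''.

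The weak spot is that you leave the crux of Step 3 undecided, and your fallback would not work. IHX at the left internal edge reads $\tree{p,q,r,s,t}=\tree{p,r,q,s,t}+\tree{r,q,p,s,t}$, the form used in Type iv) of Lemma~\ref{lema:2-contract}. With $q=r=b_1$, the first term on the right is $T$ itself, so you only get
\[
\tree{a_1,b_1,b_1,b_1,a_1}=\tree{a_1,b_1,b_1,b_1,a_1}+\tree{b_1,b_1,a_1,b_1,a_1}.
\]
The right internal edge behaves the same way by symmetry. So IHX says nothing about $T$, and the argument stands or falls with the sign of the end-swapping symmetry.

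It stands. Reflecting the planar picture left-to-right identifies the labelled tree $\tree{x_1,x_2,x_3,x_4,x_5}$ with $\tree{x_5,x_4,x_3,x_2,x_1}$. The reflection reverses the cyclic order at each of the three trivalent vertices, so $\tree{x_1,x_2,x_3,x_4,x_5}=-\tree{x_5,x_4,x_3,x_2,x_1}$. For the palindromic labelling this gives $T=-T$. This is exactly the appendix Fact on symmetric trees in odd degree $k$: the sign is $(-1)^k$, one factor per trivalent vertex. The paper relies on that Fact for $\tree{a_k,b_k,b_k,b_k,a_k}$. Once you record this computation, your proof is complete.

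A minor point: the first sentence of your Step 2 is false as stated, since the palindromic labelling has no vertex carrying two equal leaf labels. The case split that follows handles this correctly.
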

%%%%%%%%%%%%%%%%%%%%%%%%%%%%%%%%%%%%%%%%%%%%%%%%%%%%%%%%%%%%%%%%

%%%%%%%%%%%%%%%%%%%%%%%%%%%%%%%%%%%%%%%%%%%%%%%%%%%%%%%%%%%%%%%%
\begin{proof}
Assume there is at least $5$ contractions, and one of the contractions is $a_k$ with $b_k$. If another one is $a_l$ with $b_l$ for $k \neq l$, then the last label should contract $3$ times with another label which is impossible.
Hence, the possible labels are:
\[
\lbrace a_k,a_k,b_k,b_k,b_k\rbrace, \quad 
\lbrace a_k,a_k,a_k,b_k,b_k\rbrace.
\]
Therefore the trees with at least 5 contractions are:
\[
\tree{a_k,a_k,b_k,b_k,b_k}, \quad
\tree{a_k,b_k,b_k,b_k,a_k}, \quad
\tree{b_k,b_k,a_k,a_k,a_k}, \quad
\tree{b_k,a_k,a_k,a_k,b_k}.
\]
Nevertheless all these trees represent the trivial derivation.
\end{proof}
%%%%%%%%%%%%%%%%%%%%%%%%%%%%%%%%%%%%%%%%%%%%%%%%%%%%%%%%%%%%%%%%

%Then the following will conclude.

%%%%%%%%%%%%%%%%%%%%%%%%%%%%%%%%%%%%%%%%%%%%%%%%%%%%%%%%%%%%%%%%
%\begin{lemma}
%If $\col(x) \neq k$, the difference $$\tree{a_k,b_k,b_k,a_k,c} + \tree{a_f,a_k,b_k,c,b_f} \text{ with } f := f(\{k,\operatorname{col(l)}\})$$ belongs to $\Im(\tau_3)$.
%\end{lemma}
%%%%%%%%%%%%%%%%%%%%%%%%%%%%%%%%%%%%%%%%%%%%%%%%%%%%%%%%%%%%%%%%

\appendix

\section{Tree-shaped Jacobi diagrams and symplectic derivations}
\label{S:AppendixA}
Here is a quick reminder about the diagrammatic interpretation of symplectic derivations. Once again we refer to \cite[Section 2.1]{FM22} for an overview, and to \cite{Lev06} and \cite{CST12} for the historical proofs. Recall that $\mathcal L(H)$ is the free Lie algebra generated by $H$ and that $D(H)$ is the Lie algebra of symplectic derivations. Using the identification of $H$ with its dual through the symplectic form $\omega$, \[ D_k(H) = \Ker\!\Big( H\otimes \mathcal L_{k+1}(H) \xrightarrow{[\ ,\ ]} \mathcal L_{k+2}(H) \Big). \] Following Levine \cite{Lev06}, degree-$k$ symplectic derivations admit a description in terms of tree-shaped Jacobi diagrams. Let $\mathcal T_k(H)$ be the free abelian group generated by connected unitrivalent trees with $k+2$ labelled leaves, modulo the AS, IHX and multilinearity relations. Given such a tree, choosing a leaf as a root turns the diagram into an iterated Lie bracket. Summing over all possible roots defines a homomorphism \[ \eta_k:\mathcal T_k(H) \longrightarrow H\otimes \mathcal L_{k+1}(H). \] Its image lies in $D_k(H)$ and, after tensoring with $\mathbb Q$, $\eta_k$ becomes an isomorphism. Integrally, however, the situation is more subtle. 
\begin{fact}[Degree one] A degree-$1$ tree has three leaves. Modulo the AS relation, permuting the labels changes the sign according to the signature of the permutation. It is known that \[ D_1(H)\simeq \Lambda^3 H. \] \end{fact}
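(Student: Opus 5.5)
The plan is to build an explicit map $\Lambda^3 H \to D_1(H)$, show it is well defined and injective, and identify its image with the whole kernel of the bracket. Nothing deep is involved; the only point needing care is the integral bookkeeping. In degree one, a unitrivalent tree is a tripod with leaves $x,y,z$. Choosing each leaf as root and summing gives
\[
\eta_1(x,y,z) = x\otimes[y,z] + y\otimes[z,x] + z\otimes[x,y] \in H\otimes \mathcal L_2(H).
\]
The AS relation says a tripod changes sign under a transposition of two leaves. Together with multilinearity, this gives a surjection $H^{\otimes 3}\to \mathcal T_1(H)$ that factors through antisymmetric tensors. To get all the way to $\Lambda^3 H$ over $\mathbb Z$, I also need tripods with a repeated label to vanish, not merely to be $2$-torsion. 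I would handle this directly on the image: $\eta_1(x,x,y) = x\otimes[x,y] + x\otimes[y,x] + y\otimes[x,x] = 0$. So $\eta_1$ induces a homomorphism $\phi:\Lambda^3 H\to H\otimes \mathcal L_2(H)$, given by $x\wedge y\wedge z\mapsto \eta_1(x,y,z)$.

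Next I check that the image of $\phi$ lies in, and equals, $D_1(H)=\Ker([\,,\,])$. Containment is the Jacobi identity: $[x,[y,z]]+[y,[z,x]]+[z,[x,y]]=0$. For the reverse inclusion, identify $\mathcal L_2(H)\cong\Lambda^2 H$ and fix the symplectic basis $e_1,\dots,e_{2g}$ of $H$, with $n=2g$.

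First, $\phi$ is injective with torsion-free cokernel. On a basis element $e_i\wedge e_j\wedge e_k$ with $i<j<k$, the image contains $e_i\otimes e_j\wedge e_k$ with coefficient $1$. That basis vector of $H\otimes\Lambda^2H$ occurs in no other $\phi(e_{i'}\wedge e_{j'}\wedge e_{k'})$ with $i'<j'<k'$: it appears only when the first tensor factor is the smallest index, which pins down the triple. This unitriangularity gives both injectivity and torsion-freeness of the cokernel.

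Second, the bracket $H\otimes\Lambda^2H\to\mathcal L_3(H)$ is surjective. By Witt's formula, $\operatorname{rank}\mathcal L_3(H)=(n^3-n)/3 = n\binom n2-\binom n3$. So $\Ker([\,,\,])$ has rank $\binom n3$, the same as $\Im\phi$.

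Since $\Im\phi\subseteq\Ker$ have equal rank and $H\otimes\Lambda^2H/\Im\phi$ is torsion-free, $\Ker/\Im\phi$ is a torsion subgroup of a torsion-free group. Hence $\Ker=\Im\phi$, and $D_1(H)\simeq\Lambda^3 H$.

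The main obstacle is this last integral step. The rational statement is immediate, but over $\mathbb Z$ one must rule out a finite-index discrepancy between $\Im\phi$ and the kernel. The unitriangularity argument is what handles it, so I would write that step out most carefully, and the Witt rank count and the vanishing of repeated-label tripods are then routine.
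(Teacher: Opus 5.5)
Your proof is correct. The route differs from the paper's only because the paper gives no argument: the fact is recorded as known and deferred to the standard references on the tree description of $D(H)$ cited in the appendix.

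What you supply is a self-contained integral proof that the map $\phi\colon\Lambda^3H\to H\otimes\mathcal L_2(H)$ induced by $\eta_1$ is an isomorphism onto $D_1(H)$. In other words, you show that
\[
0\longrightarrow\Lambda^3H\xrightarrow{\ \phi\ }H\otimes\Lambda^2H\xrightarrow{[\,,\,]}\mathcal L_3(H)\longrightarrow 0
\]
is exact over $\mathbb Z$. The three ingredients are as follows. The Jacobi identity gives $\Im\phi\subseteq\Ker$. The coordinates $e_i\otimes e_j\wedge e_k$ with $i<j<k$ give a retraction of $\phi$, so $\Im\phi$ is a direct summand. Witt's formula together with surjectivity of the bracket matches the ranks.

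Your version also makes two things explicit that the citation leaves implicit.
\begin{enumerate}
\item Since $D_1(H)$ is a kernel between free abelian groups, the rank count alone would only give an abstract isomorphism. Your argument shows the isomorphism is the natural, $\Sp(H)$-equivariant one coming from tripods. In particular, tripods span $D_1(H)$ integrally, which is what the paper relies on when it computes $[D_1(H),[D_1(H),D_1(H)]]$ with tripods.
\item You define $\phi$ directly on $\Lambda^3H$ and check $\eta_1(x,x,y)=0$, rather than asserting that repeated-label tripods vanish in $\mathcal T_1(H)$. This correctly avoids the convention-dependent $2$-torsion in the tree module. It also agrees with the appendix's later fact that symmetric trees in odd degree give trivial derivations.
\end{enumerate}
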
 

\begin{fact}[Degree two] The image of $\eta_2(\tree{a,b,a,b})$ is divisible by $2$ in $D_2(H)$. Thus the integral tree model for $D_2(H)$ contains, besides ordinary trees, the additional generators \[ \frac12\,\tree{a,b,a,b}. \] \end{fact}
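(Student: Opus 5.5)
The plan is to compute $\eta_2\bigl(\tree{a,b,a,b}\bigr)$ explicitly, write it as $2X$ with $X\in H\otimes\mathcal L_3(H)$, and then check that $X$ itself already lies in $D_2(H)=\Ker\bigl(H\otimes\mathcal L_3(H)\to\mathcal L_4(H)\bigr)$. Because $H\otimes\mathcal L_3(H)$ is a free abelian group, divisibility by $2$ inside $D_2(H)$ means exactly this: $\eta_2(T)=2X$ for some $X$ that again lies in the kernel of the bracket map.

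\textbf{Step 1: the four root terms.} Let $T=\tree{a,b,a,b}$. This is the H-shaped tree whose left trivalent vertex carries the leaves $a,b$ and whose right trivalent vertex carries the leaves $a,b$. By definition, $\eta_2(T)$ is a sum of four terms, one for each leaf chosen as root. Each term has the form $(\text{root label})\otimes(\text{iterated bracket read off from the rest of the tree})$. Up to a global sign convention fixed by the identification $H\simeq H^*$, choosing the left $a$ as root gives $a\otimes[b,[a,b]]$. Choosing the left $b$ as root gives $b\otimes[[a,b],a]=-\,b\otimes[a,[a,b]]$. The two right-hand leaves give analogous expressions.

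\textbf{Step 2: pairing the terms.} The key observation is that $T$ has a symmetry: rotating the planar picture by $180^\circ$ swaps the two trivalent vertices and reverses the internal edge. This rotation sends the left $a$ to the right $a$ and the left $b$ to the right $b$. It preserves the cyclic orientation at each vertex, since a planar rotation preserves the planar cyclic orders. Hence it is an orientation-preserving automorphism of the labelled tree. Consequently the term obtained by rooting at the right $a$ equals the term obtained by rooting at the left $a$, and the same holds for the two $b$'s. This pairing is the step I expect to be the real content. The risk is a sign slip: if the automorphism reversed orientation, the terms would cancel instead of doubling. So I will verify the AS signs by writing both brackets out explicitly, for instance checking that rooting at the right $a$ gives $a\otimes[[a,b],b]$ read with the reversed convention, which equals $a\otimes[b,[a,b]]$. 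The result is
\[
\eta_2(T)=2X,\qquad X:=a\otimes[b,[a,b]]-b\otimes[a,[a,b]]
\]
(up to an overall sign).

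\textbf{Step 3: $X$ is a symplectic derivation.} Apply the bracket map to $X$:
\[
[a,[b,[a,b]]]-[b,[a,[a,b]]].
\]
By the Jacobi identity, $[a,[b,u]]-[b,[a,u]]=[[a,b],u]$. Taking $u=[a,b]$ gives $[[a,b],[a,b]]=0$. Hence $X\in D_2(H)$, and therefore $\eta_2(T)$ is divisible by $2$ in $D_2(H)$.

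\textbf{Step 4 (optional).} To justify that $\frac12\tree{a,b,a,b}$ is a genuinely new generator, one can specialise to $a=a_1$, $b=b_1$ and compare coefficients. In every $\eta_2$ of an integral tree, the coefficient of $a_1\otimes[b_1,[a_1,b_1]]$ is even, whereas $X$ has coefficient $1$. This remark is not needed for the divisibility statement itself.
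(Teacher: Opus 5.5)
The paper does not prove this Fact. It only recalls it in the appendix, with a pointer to \cite{Lev06}, \cite{CST12} and \cite{FM22}. Your argument is correct, and it is the standard one from that literature:
\begin{itemize}
\item $T$ is two copies of the rooted tree $[a,b]$ joined at their roots.
\item The half-turn is an orientation-preserving, label-preserving automorphism exchanging the two halves. This matches the paper's drawing, where $a$ sits bottom-left and top-right and $b$ sits top-left and bottom-right.
\item Hence the root terms coincide in pairs, and $\eta_2(T)=\pm 2X$ with $X=a\otimes[b,[a,b]]-b\otimes[a,[a,b]]$.
\end{itemize}

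\textbf{Step 3.} It is fine, but a one-line purity argument can replace it. Since $\eta_2(T)\in D_2(H)$ and $\mathcal{L}_4(H)$ is torsion-free, $D_2(H)$ is a pure subgroup of $H\otimes\mathcal{L}_3(H)$, so $2X\in D_2(H)$ already forces $X\in D_2(H)$. Your Jacobi computation has the merit of not using that $\eta$ lands in $D(H)$.

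\textbf{Step 2.} The parenthetical sign check contains exactly the slip you feared. We have $a\otimes[[a,b],b]=-a\otimes[b,[a,b]]$, so the identity as written is off by a sign. Read the right $a$-leaf explicitly with the convention you used on the left: the right vertex seen from its $a$-leaf, then the left vertex seen from the internal edge. This gives $a\otimes[b,[a,b]]$ directly, with no reversal. The automorphism argument itself is unaffected.

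\textbf{Step 4.} It is correct, but the justification you left implicit is multihomogeneity. After expanding labels multilinearly, only trees with label multiset $\{a_1,a_1,b_1,b_1\}$ contribute to that coefficient. Up to AS these are of two kinds:
\begin{itemize}
\item $\pm T$ with $a=a_1$, $b=b_1$, which contributes $\pm 2$;
\item trees with a vertex carrying two equal labels, which $\eta_2$ kills.
\end{itemize}
That trees together with these halves generate all of $D_2(H)$ is the deeper result of Levine and Conant--Schneiderman--Teichner, which neither you nor the Fact needs.
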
 

\begin{fact}
Let $k$ be odd. Suppose a tree is symmetric of the shape \[\tree{T,x,T}\] where the two copies of $T$ are identical. Then the corresponding derivation is trivial.
\end{fact}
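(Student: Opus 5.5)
The plan is to exploit the $\operatorname{AS}$ relation at the central trivalent vertex, combined with the fact that the target of $\eta_k$ is torsion-free. Write $t$ for the tree of shape $\tree{T,x,T}$. The leaf $x$ is attached to a trivalent vertex $v$ whose two other edges lead to two copies of the same rooted labelled tree $T$. Since $t$ has $k+2$ leaves, this is $2|T|+1$ leaves, so $k$ odd is exactly the condition for this shape to occur in degree $k$.

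First step: build the automorphism of the underlying unitrivalent graph that exchanges the two branches at $v$ and fixes $x$. Because the two branches carry the same labels and the same cyclic orientations, it maps the labelled tree to itself. It preserves the cyclic order at every trivalent vertex inside the copies of $T$. At $v$ it transposes two of the three incident edges, which reverses the cyclic order there. By the $\operatorname{AS}$ relation this gives $t=-t$ in $\mathcal T_k(H)$, hence $2t=0$.

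Second step: pass to derivations. The group $H\otimes\mathcal L_{k+1}(H)$ is free abelian, because $\mathcal L_{k+1}(H)$ is a direct summand-free subgroup of the free abelian group $T_{k+1}(H)$. So $2\eta_k(t)=0$ forces $\eta_k(t)=0$, and the derivation is trivial.

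As a sanity check, and as an alternative proof if one distrusts the graph-automorphism argument, I would verify the claim directly on $\eta_k(t)$, which sums over all choices of root.
\begin{itemize}
\item Rooting at $x$ gives $x\otimes[T,T]=0$.
\item Each leaf $\ell$ in the first copy of $T$ pairs with its twin $\ell'$ in the second copy. Rooting at $\ell$ yields $\ell\otimes[\cdots[\,\cdot\,,[x,T]]\cdots]$, and rooting at $\ell'$ yields the same iterated bracket with $[T,x]$ in place of $[x,T]$.
\item Antisymmetry of the bracket then makes the two contributions cancel.
\end{itemize}

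The main obstacle is getting the sign bookkeeping right, namely checking that the branch swap contributes exactly one orientation reversal, at $v$ only, and no others. I would confirm this on the smallest case, the degree-one tree $\tree{y,x,y}$, where $y\wedge x\wedge y=0$. I would also check the degree-three trees $\tree{a_k,a_k,b_k,b_k,b_k}$ used earlier, which have this shape with $T=[a_k,b_k]$ and are exactly the trees this Fact is meant to cover.
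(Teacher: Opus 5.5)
Your proof is correct. The paper does not prove this Fact; it states it in the appendix as background and refers to \cite{Lev06} and \cite{CST12}. Your argument is the standard one: the branch-swapping automorphism together with AS gives $2t=0$ in $\mathcal T_k(H)$, and torsion-freeness of $H\otimes\mathcal L_{k+1}(H)$ then forces $\eta_k(t)=0$. Your root-by-root cancellation is an equally valid direct proof that does not need the torsion-freeness step. You are also right not to claim $t=0$ in $\mathcal T_k(H)$ itself.

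Three small corrections.

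First, the justification in your second step should read as follows. $\mathcal L_{k+1}(H)$ is a subgroup of the finitely generated free abelian group $T_{k+1}(H)$, hence free. So $H\otimes\mathcal L_{k+1}(H)$ is torsion-free. The phrase ``direct summand-free subgroup'' is garbled.

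Second, you do not need exactly one orientation reversal. Since the swap $\phi$ is an involution, $\phi_*o$ differs from $o$ at a vertex $w$ inside a copy if and only if it differs at $\phi(w)$. So the reversals away from $v$ come in pairs, and the total number is odd however the two copies are oriented. This is what actually covers the paper's drawings. For example, in $\tree{a_k,b_k,b_k,b_k,a_k}$ the two cherries are drawn as mirror images, so they carry opposite cyclic orientations, and the swap reverses all three vertices. Hence ``identical'' only needs to mean identical as labelled unoriented rooted trees.

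Third, your sanity check misreads one tree. In the paper's notation, $\tree{a_k,a_k,b_k,b_k,b_k}$ has cherries $\{a_k,a_k\}$ and $\{b_k,b_k\}$, so it is not of the shape $\tree{T,x,T}$ with $T$ a cherry on $a_k,b_k$. It vanishes by the same swap argument applied at a cherry with two equal labels, with the rest of the tree in place of the leaf $x$. The trees of the Fact's shape in that lemma are $\tree{a_k,b_k,b_k,b_k,a_k}$ and $\tree{b_k,a_k,a_k,a_k,b_k}$.
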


Finally, since it is well-known that the Lie bracket of two derivations is a derivation, we need to translate this at the level of trees. If $T_1$ and $T_2$ are two trees, their bracket is defined by summing over all possible gluings of a leaf of $T_1$ with a leaf of $T_2$, weighted by the symplectic pairing of the corresponding labels. More precisely, \[ [T_1,T_2] = \sum_{v_1,v_2} \omega(\ell(v_1),\ell(v_2)) \,T_1\ast_{v_1,v_2} T_2, \] where the sum runs over all leaves $v_1$ of $T_1$ and $v_2$ of $T_2$, $\ell(v_i)$ denotes the label of $v_i$, and $T_1\ast_{v_1,v_2} T_2$ is the tree obtained by gluing the leaves $v_1$ and $v_2$.

\begin{fact}
Under this bracket, the map \[ \eta:=\bigoplus_{k\geq 1}\eta_k \] is a homomorphism of graded Lie algebras. Equivalently, if $T_1$ and $T_2$ are two tree diagrams, then \[ \eta([T_1,T_2]) = [\eta(T_1),\eta(T_2)], \] where the bracket on the right-hand side is the commutator of derivations. Consequently, computations in $D(H)$ may be performed diagrammatically.
\end{fact}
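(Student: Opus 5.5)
The plan is to reduce to elementary trees and compare both sides as derivations of $\calL(H)$, evaluating each on a single generator $x\in H$. Both sides of $\eta([T_1,T_2])=[\eta(T_1),\eta(T_2)]$ are bilinear in $(T_1,T_2)$. The gluing bracket is also compatible with the AS, IHX and multilinearity relations: these relations are local at a trivalent vertex or a leaf, and gluing at a leaf only changes the neighbourhood of that leaf. So it suffices to take $T_1,T_2$ elementary trees with leaves labelled by elements of $H$. Moreover $D(H)\subset H\otimes\calL(H)$ is torsion free, so I may freely work after tensoring with $\mathbb Q$ whenever convenient, for instance when the half-integral degree-$2$ generators $\frac12\tree{a,b,a,b}$ occur.

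First I will make explicit the derivation attached to an elementary tree $T$. Under $H\cong H^*$, $\eta(T)=\sum_v \ell(v)\otimes T_v$, where $T_v\in\calL(H)$ is the iterated bracket obtained by rooting $T$ at the leaf $v$. As a derivation this reads
\[
\eta(T)(x)=\sum_v \omega(\ell(v),x)\,T_v .
\]
Next I compute $\eta(T_1)(\eta(T_2)(x))$. Since $\eta(T_1)$ is a derivation, applying it to the Lie word $T_{2,w}$ (the tree $T_2$ rooted at the leaf $w$ with $\omega(\ell(w),x)\neq 0$) replaces one leaf $u\neq w$ of $T_2$ at a time by $\sum_v\omega(\ell(v),\ell(u))\,T_{1,v}$. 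Substituting the word $T_{1,v}$ for the leaf $u$ is exactly the rooted form of the glued tree $T_1\ast_{v,u}T_2$, rooted at $w$. Hence $\eta(T_1)\eta(T_2)(x)$ is the sum over gluings $(v,u)$ of $\omega(\ell(v),\ell(u))\,\omega(\ell(w),x)\,(T_1\ast_{v,u}T_2)_w$, where $w$ runs over the leaves of $T_2$ other than $u$.

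Symmetrically, $\eta(T_2)\eta(T_1)(x)$ gives the sum over gluings with the root $w$ among the leaves of $T_1$ other than $v$, with the pairing $\omega(\ell(u),\ell(v))=-\omega(\ell(v),\ell(u))$. Taking the difference, every leaf of the glued tree $T_1\ast_{v,u}T_2$ appears exactly once as a root. The result is therefore $\sum_{v,u}\omega(\ell(v),\ell(u))\,\eta(T_1\ast_{v,u}T_2)(x)$, which is $\eta([T_1,T_2])(x)$.

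The main obstacle is the bookkeeping of signs and orientations. I must check that the cyclic orientation of the glued tree, induced from those of $T_1$ and $T_2$ at the glued edge, makes the rooted bracket $(T_1\ast T_2)_w$ coincide with the substituted Lie word. The same must hold on the $T_1$ side: there, re-rooting introduces AS signs, and these must combine with the antisymmetry of $\omega$ to give $+$ rather than $-$. I will first fix the convention that $T_v$ is read with the root edge as the last bracket slot. Then I will verify the identity directly on the smallest case, two degree-$1$ trees $\tree{a,b,c}$ and $\tree{d,e,f}$, by explicit expansion, and use that case to pin down the sign convention before running the general argument.
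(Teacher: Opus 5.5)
The paper does not prove this Fact. It is recalled in the appendix as standard, with the reader referred to Levine, Conant--Schneiderman--Teichner and Faes--Massuyeau. Your proposal therefore replaces a citation with a self-contained argument, and the argument is correct. You evaluate both sides on $x\in H$ with $\eta(T)(x)=\sum_v\omega(\ell(v),x)\,T_v$. You expand $\eta(T_1)$ as a derivation on the word $T_{2,w}$ and recognise the substituted word as the rooted glued tree $(T_1\ast_{v,u}T_2)_w$. Finally, $\omega(\ell(u),\ell(v))=-\omega(\ell(v),\ell(u))$ absorbs the minus sign of the commutator, so every leaf of the glued tree occurs exactly once as a root. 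This is the direct verification the references rely on. It buys transparency about conventions; the citation buys brevity.

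Two comments. First, the sign obstacle you anticipate is not there. Every trivalent vertex of $T_1\ast_{v,u}T_2$ keeps its cyclic order. When you root at a leaf $w$ of $T_1$, every vertex of $T_2$ has its root-ward edge pointing towards the glued edge, so it is read exactly as in $T_{2,u}$. Hence $(T_1\ast_{v,u}T_2)_w$ equals $T_{1,w}$ with the letter $\ell(v)$ replaced by $T_{2,u}$ on the nose, and symmetrically for $w$ in $T_2$. This holds for any reading convention that depends only on the cyclic order and the root-ward edge. So re-rooting produces no AS sign, and the $+$ comes from the antisymmetry of $\omega$ alone; the degree-one check is harmless but unnecessary. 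The only convention that fixes the global sign is the identification $H\cong H^*$. With $x\mapsto\omega(x,-)$, as in the paper and in your formula, you get $+$; with $x\mapsto\omega(-,x)$ you would get $\eta([T_1,T_2])=-[\eta(T_1),\eta(T_2)]$.

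Second, your computation proves the displayed identity, which is the form the Fact calls equivalent. If you also want $\mathcal{T}(H)$ with the gluing bracket to be a graded Lie algebra over $\mathbb{Z}$, check antisymmetry and Jacobi directly on configurations of two or three glued trees: each chain of three trees occurs twice, with opposite $\omega$-signs. These properties cannot be pulled back from $D(H)$, because $\eta$ is not injective integrally: it kills the $2$-torsion symmetric trees of the preceding Fact.
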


\bibliographystyle{alpha}
\bibliography{biblio}
\end{document}